\newcommand{\C}{\mathbb{C}}
\newcommand{\<}{\langle}
\renewcommand{\>}{\rangle}
\newcommand{\eps}{\varepsilon}
\newcommand{\tr}{\mbox{tr}}
\newcommand{\conj}{\overline}
\newcommand{\beq}{\begin{equation}}
\newcommand{\eeq}{\end{equation}}
\renewcommand{\tilde}{\widetilde}
\renewcommand{\hat}{\widehat}
\newcommand{\bit}{\begin{itemize}}
\newcommand{\eit}{\end{itemize}}
\newtheorem{theorem}{Theorem}
\newtheorem{lemma}{Lemma}
\newtheorem{corollary}[theorem]{Corollary}
\title{Convex recovery from \\ interferometric measurements}
\author{Laurent Demanet and Vincent Jugnon \\ $\,$ \\ Department of Mathematics, MIT }
\date{July 2013, revised September 2016}		% remove for today's date
\begin{document}
\maketitle

%\begin{center}
%{\em Dedicated to George Papanicolaou on the occasion of his 70th birthday.}
%\end{center}

\begin{abstract}

This paper discusses some questions that arise when a linear inverse problem involving $Ax = b$ is reformulated in the interferometric framework, where quadratic combinations of $b$ are considered as data in place of $b$.

First, we show a deterministic recovery result for vectors $x$ from measurements of the form $(Ax)_i \conj{(Ax)_j}$ for some left-invertible $A$. Recovery is exact, or stable in the noisy case, when the couples $(i,j)$ are chosen as edges of a well-connected graph. One possible way of obtaining the solution is as a feasible point of a simple semidefinite program. Furthermore, we show how the proportionality constant in the error estimate depends on the spectral gap of a data-weighted graph Laplacian. 

Second, we present a new application of this formulation to interferometric waveform inversion, where products of the form $(Ax)_i \conj{(Ax)_j}$ in frequency encode generalized cross-correlations in time. We present numerical evidence that interferometric \emph{inversion} does not suffer from the loss of resolution generally associated with interferometric imaging, and can provide added robustness with respect to specific kinds of kinematic uncertainties in the forward model $A$.

\end{abstract}

{\bf Acknowledgments.} The authors would like to thank Amit Singer, George Papanicolaou, and Liliana Borcea for interesting discussions.  Some of the results in this paper were reported in the conference proceedings of the 2013 SEG annual meeting \cite{Jugnon-SEG}. This work was supported by  the Earth Resources Laboratory at MIT, AFOSR grants FA9550-12-1-0328 and FA9550-15-1-0078, ONR grant N00014-16-1-2122, NSF grant DMS-1255203, the Alfred P. Sloan Foundation, and Total SA.
\section{Introduction}

Throughout this paper, we consider complex quadratic measurements of $x \in \C^n$ of the form
\begin{equation}\label{eq:interf}
B_{ij} = (Ax)_i \conj{(Ax)_j}, \qquad (i,j) \in E,
\end{equation}
for certain well-chosen couples of indices $(i,j)$, a scenario that we qualify as ``interferometric". This combination is special in that it is symmetric in $x$, and of rank 1 with respect to the indices $i$ and $j$.

%Recovery of $x$ will be sought up to a global phase factor.

The regime that interests us is when the number $m$ of measurements, i.e., of couples $(i,j)$ in $E$, is comparable to the number $n$ of unknowns. While phaseless measurements $b_{i} = | (Ax)_i |^2$ only admit recovery when $A$ has very special structure -- such as, being a tall random matrix with Gaussian i.i.d. entries \cite{CandesEldar, DemanetHand} -- products $(Ax)_i \conj{(Ax)_j}$ for $i \ne j$ correspond to the idea of phase differences, hence encode much more information. As a consequence, stable recovery occurs under very general conditions: left-invertibility of $A$ and ``connectedness" of set $E$ of couples $(i,j)$. These conditions suffices to allow for $m$ to be on the order of $n$. Various algorithms return $x$ accurately up to a global phase; we mosty discuss variants of lifting with semidefinite relaxation in this paper. In contrast to other recovery results in matrix completion \cite{CandesRecht, RechtFazelParrilo}, \emph{no randomness is needed in the data model}, and our proof technique involves elementary spectral graph theory rather than dual certification or uncertainty principles.

The mathematical content of this paper is the formulation of a quadratic analogue of the well-known relative error bound
\begin{equation}\label{eq:LS}
\frac{\| x - x_0 \|}{\| x_0 \|} \leq \kappa(A) \, \frac{\| e \|}{\| b \|}
\end{equation}
for the least-squares solution of the overdetermined linear system $Ax=b$ with $b = Ax_0 + e$, and where $\kappa(A)$ is the condition number of $A$. Our result is that an inequality of the form (\ref{eq:LS}) still holds in the quadratic case, but with the \emph{square root of the spectral gap of a data-weighted graph Laplacian} in place of $\| b \|$ in the right-hand side. This spectral gap quantifies the connectedness of $E$, and has the proper homogeneity with respect to $b$.

The numerical results mostly concern the case when $A$ is a forward model that involves solving a linearized fixed-frequency wave equation, as in seismic or radar imaging. In case $x$ is a reflectivity, $Ax$ is the wavefield that results from an incoming wave being scattered by $x$, and $(Ax)_i \conj{(Ax)_j}$ has a meaning in the context of interferometry, as we explain in the next section. Our claims are that
\begin{itemize}
\item Stable recovery holds, and the choice of measurement set $E$ for which this is the case matches the theory in this paper;
\item Interferometric inversion yields no apparent loss of resolution when compared against the classical imaging methods; and
\item Some degree of robustness to specific model inaccuracies is observed in practice, although it is not explained by the theory in this paper.
\end{itemize}

% In that case $x$ is a reflectivity, $(A x)_i$ is a sampled wavefield in the frequency domain, more appropriately written as $u(\mathbf{x}_i, \omega_i)$ where $i$ indexes both the sensor location and the frequency. The significance of a combinatio

\subsection{Physical context: interferometry}

In optical imaging, an interference fringe of two (possibly complex-valued) wavefields $f(t)$ and $g(t)$, where $t$ is either a time or a space variable, is any combination of the form $|f(t) + g(t+t')|^2$. The sum is a result of the linearity of amplitudes in the fundamental equations of physics (such as Maxwell or Schr\"{o}dinger), while the modulus squared is simply the result of a detector measuring intensities. The cross term $2 \Re (f(t) \overline{g(t+t')})$ in the expansion of the squared modulus manifestly carries the information of destructive vs. constructive interference, hence is a continuous version of what we referred to earlier as an ``interferometric measurement".

In particular, when the two signals are sinusoidal at the same frequency, the interferometric combination highlights a phase difference. In astronomical interferometry, the delay $t'$ is for instance chosen so that the two signals interfere constructively, yielding better resolution. Interferometric synthetic aperture radar (InSAR) is a remote sensing technique that uses the fringe from two datasets taken at different times to infer small displacements. In X-ray ptychograhy \cite{Rodenburg}, imaging is done by undoing the interferometric combinations that the diffracted X-rays undergo from encoding masks. These are but three examples in a long list of applications.

Interferometry is also playing an increasingly important role in geophysical imaging, i.e., inversion of the elastic parameters of the portions of the Earth's upper crust from scattered seismic waves. In this context however, the signals are more often impulsive than monochromatic, and interferometry is done as part of the \emph{computational processing rather than the physical measurements}\footnote{Time reversal is an important exception not considered in this paper, where interferometry stems from experimental acquisition rather than processing.}. An interesting combination of two seismogram traces $f$ and $g$ at nearby receivers is then $\hat{f}(\omega) \overline{\hat{g}(\omega)}$, i.e., the Fourier transform of their cross-correlation. It highlights a time lag in the case when $f$ and $g$ are impulses. More generally, it will be important to also consider the cross-ambiguities $\hat{f}(\omega) \overline{\hat{g}(\omega')}$ where $\omega' \simeq \omega$.

Cross-correlations have been shown to play an important role in geophysical imaging, mostly because of their stability to statistical fluctuations of a scattering medium \cite{Blomgren} or an incoherent source \cite{Garnier, Lobkis}. Though seismic interferometry is a vast research area, both in the exploration and global contexts \cite{Borcea,  Fichtner, Hanasoge, Schuster-book, SchusterYu, Tromp, WapenaarFokkema}, explicit inversion of reflectivity parameters from interferometric data has to our knowledge only been considered in \cite{Dussaud-thesis, Jugnon-SEG}. Interferometric inversion offers great promise for model-robust imaging, i.e., recovery of reflectivity maps in a less-sensitive way on specific kinds of errors in the forward model.

Finally, interferometric measurements also play an important role in quantum optical imaging. See \cite{Schotland} for a nice solution to the inverse problem of recovering a scattering dielectric susceptibility from measurements of two-point correlation functions relative to two-photon entangled states.

As we revise this paper, we also note the recent success of interferometric inversion for passive synthetic-aperture radar (SAR) imaging, under the name low-rank matrix recovery \cite{Yazici}.

\subsection{Mathematical context and related work}

The setting of this paper is discrete, hence we let $i$ and $j$ in place of either a time of frequency variable. We also specialize to $f=g$, and we let $f = Ax$ to possibly allow an explanation of the signal $f$ by a linear forward model\footnote{Such as scattering from a reflectivity profile $x$ in the Born approximation, for which $A$ is a wave equation Green's function. Section \ref{sec:num} covers the description of $A$ in this context.} $A$.

The link between products of the form $f \, \overline{g}$ and squared measurements $|f + g|^2$ goes both ways, as shown by the polarization identity
\[
f_i \, \conj{f_j} = \frac{1}{4} \sum_{k=1}^4 e^{- i \pi k/2} |f_i + e^{i \pi k/2} f_j |^2.
\]
Hence any result of robust recovery of $f$, or $x$, from couples $f_i \, \conj{f_j}$, implies the same result for recovery from phaseless measurements of the form $|f_i + e^{i \pi k/2} f_j |^2$. This latter setting was precisely considerered by Cand\`{e}s et al. in \cite{CandesEldar}, where an application to X-ray diffraction imaging with a specific choice of masks is discussed. In \cite{Alexeev}, Alexeev et al. use the same polarization identity to design good measurements for phase retrieval, such that recovery is possible with $m = O(n)$.

Recovery of $f_i$ from $f_i \conj{f_j}$ for some $(i,j)$ when $| f_i | = 1$ (interferometric phase retrieval) can be seen a special case of the problem of angular synchronization considered by Singer \cite{Singer}. There, rotation matrices $R_i$ are to be recovered (up to a global rotation) from measurements of relative rotations $R_i R_j^{-1}$ for some $(i,j)$. This problem has an important application to cryo-electron microscopy, where the measurements of relative rotations are further corrupted in an a priori unknown fashion (i.e., the set $E$ is to be recovered as well). An impressive recovery result under a Bernoulli model of gross corruption, with a characterization of the critical probability, were recently obtained by Wang and Singer \cite{Wang}. The spectrum of an adapted graph Laplacian plays an important role in their analysis \cite{Bandeira}, much as it does in this paper. Singer and Cucuringu also considered the angular synchronization problem from the viewpoint of rigidity theory \cite{SingerCucuringu}. For the similar problem of recovery of positions from relative distance, with applications to sensor network localization, see for instance \cite{Javanmard}.

The algorithmic approach considered in this paper for solving interferometric inversion problems is to formulate them via lifting and semidefinite relaxation. This idea was considered by many groups in recent years \cite{CandesEldar, Chai,  Javanmard, Singer, Waldspurger}, and finds its origin in theoretical computer science \cite{Goemans}.

%TODO:
% importance of rank-1 measurements, what happens in the more general case - future work?
% link to work of Bandeira: 1/lambda_2 scaling and maybe other things
% role of expanders in Singer's work
% what if the diagonal is not present, take a log, nonsymmetric case - future work?
% mention link to rigidity: look up Singer's paper

%Polarization identity:
%\[
%\conj{x_i} x_j = \frac{1}{4} \sum_k i^k |x_i + i^k x_j |^2,
%\]
%and conversely,
%\[
%|x_i + i^k x_j |^2 = |x_i|^2 + |x_j|^2 - 2 \Re ( i^k x_i \conj{x_j}).
%\]

\section{Mathematical results}

\subsection{Recovery of unknown phases}

Let us start by describing the simpler problem of interferometric phase recovery, when $A = I$ and we furthermore assume $| x_i | = 1$. Given a vector $x_0 \in \C^n$ such that $|(x_0)_i| = 1$, a set $E$ of pairs $(i,j)$, and noisy interferometric data $B_{ij} = (x_0)_i \conj{(x_0)_j} + \eps_{ij}$, find a vector $x$ such that 
\beq\label{eq:P1}
|x_i| = 1, \qquad \sum_{(i,j) \in E} | x_i \conj{x_j} - B_{ij} |\leq \sigma,
\eeq
for some $\sigma > 0$. Here and below, if no heuristic is provided for $\sigma$, we may cast the problem as a minimization problem for the misfit and obtain $\sigma$ a posteriori.

The choice of the elementwise $\ell_1$ norm over $E$ is arbitrary, but convenient for the analysis in the sequel\footnote{The choice of $\ell_1$ norm as ``least unsquared deviation" is central in \cite{Wang} for the type of outlier-robust recovery behavior documented there.}. We aim to find situations in which this problem has a solution $x$ close to $x_0$, up to a global phase. Notice that $x_0$ is feasible for (\ref{eq:P1}), hence a solution exists, as soon as $\sigma \geq \sum_{{i,j} \in E} | \eps_{ij}|$.

The relaxation by \emph{lifting} of this problem is to find $X$ (a proxy for $xx^*$) such that 
\begin{align}\label{eq:P2}
& X_{ii} = 1, \qquad \sum_{(i,j) \in E} |X_{ij} - B_{ij}| \leq \sigma, \qquad X \succeq 0, \notag \\
& \mbox{then let $x$ be the top eigenvector of $X$ with $\| x \|^2 = n$.}
\end{align}
The notation $X \succeq 0$ means that $X$ is symmetric and positive semi-definite. Again, the feasibility problem (\ref{eq:P2}) has at least one solution ($X_0 = x_0 x_0^*$) as soon as $\sigma \geq \sum_{{i,j} \in E} | \eps_{ij}|$. 

%Our result of robust recovery uses basic notions of spectral graph theory that we now review. 

The set $E$ generates edges of a graph $G=(V,E)$, where the nodes in $V$ are indexed by $i$. Without loss of generality, we consider $E$ to be symmetric. By convention, $G$ does not contain loops, i.e., the diagonal $j=i$ is not part of $E$. (Measurements on the diagonal are not informative for the phase recovery problem, since $| (x_0)_i|^2 = 1$.)

The graph Laplacian on $G$ is
\[ L_{ij} = \left\{ \begin{array}{ll}
         d_i & \mbox{if $i = j$};\\
        -1 & \mbox{if $(i,j) \in E$};\\
        0 & \mbox{otherwise},\end{array} \right. 
\]
where $d_i$ is the node degree $d_i = \sum_{j: (i,j) \in E} 1$. Observe that $L$ is symmetric and $L \succeq 0$ by Gershgorin. Denote by $\lambda_1 \leq \lambda_2 \leq \ldots \leq \lambda_n$ the eigenvalues of $L$ sorted in increasing order. Then $\lambda_1 = 0$ with the constant eigenvector $v_1 = \frac{1}{\sqrt{n}}$. The second eigenvalue is zero if and only if $G$ has two or more disconnected components. When $\lambda_2 > 0$, its value is a measure of connectedness of the graph. Note that $\lambda_n \leq 2 d$ by Gershgorin again, where $d = \max_i d_i$ is the maximum degree.

%TODO [Relate $\lambda_2$ to Cheeger constant, expansion properties of the graph, mixing of random walks, degree of separation.]

Since $\lambda_1 = 0$, the second eigenvalue $\lambda_2$ is called the spectral gap. It is a central quantity in the study of expander graphs: it relates to
\bit
\item the edge expansion (Cheeger constant, large if $\lambda_2$ is large);
\item  the degree of separation between any two nodes (small if $\lambda_2$ is large); and 
\item the speed of mixing of a random walk on the graph (fast if $\lambda_2$ is large). 
\eit
More information about spectral graph theory can be found, e.g.,  in the lecture notes by Lovasz \cite{Lovasz}. It is easy to show with interlacing theorems that adding an edge to $E$, or removing a node from $V$, both increase $\lambda_2$. The spectral gap plays an important role in the following stability result.

% Courant-Weyl 

In the sequel, we denote the componentwise $\ell_1$ norm on the set $E$ by $\| \cdot \|_1$.

\begin{theorem}\label{teo:main1}
Assume $\| \eps \|_1 + \sigma \leq n \lambda_2 $, where $\lambda_2$ is the second eigenvalue of the graph Laplacian $L$ on $G$. Any solution $x$ of (\ref{eq:P1}) or (\ref{eq:P2}) obeys
\[
\| x - e^{i\alpha} x_0 \| \leq 4 \; \sqrt{\frac{\| \eps \|_1 + \sigma}{\lambda_2}},
\]
for some $\alpha \in [0,2 \pi)$.
\end{theorem}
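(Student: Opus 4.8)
The plan is to handle \eqref{eq:P1} and \eqref{eq:P2} uniformly by first removing the unknown reference phases through a gauge transformation, and then converting the $\ell_1$ data misfit into a Laplacian quadratic form that the spectral gap controls. Set $\delta = \| \eps \|_1 + \sigma$. Since $|(x_0)_i| = 1$, the diagonal matrix $P = \mathrm{diag}((x_0)_i)$ is unitary, and conjugation by $P$ sends $x_0$ to the all-ones vector $\mathbf{1}$ and $x_0 x_0^*$ to $\mathbf{1}\mathbf{1}^*$, while preserving positive semidefiniteness, the unit diagonal, and (because $|(x_0)_i \conj{(x_0)_j}| = 1$) the modulus of every edgewise residual. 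It therefore suffices to bound the distance of the gauged iterate to $\mathbf{1}$. Applying the triangle inequality to $B_{ij} = (x_0)_i\conj{(x_0)_j} + \eps_{ij}$ inside the feasibility constraint gives $\sum_{(i,j)\in E} | x_i \conj{x_j} - (x_0)_i \conj{(x_0)_j} | \le \delta$, i.e. the misfit to the \emph{noiseless} interferometric data is at most $\delta$.

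For \eqref{eq:P1}, write $y_i = \conj{(x_0)_i} x_i$, so that $|y_i| = 1$ and the bound above reads $\sum_{(i,j)\in E} |y_i \conj{y_j} - 1| \le \delta$. The elementary engine is the identity $|y_i - y_j| = |y_i \conj{y_j} - 1|$ for unit-modulus numbers, combined with $y^* L y = \tfrac12 \sum_{(i,j)\in E} |y_i - y_j|^2$. Since $|y_i\conj{y_j} - 1| \le 2$, each squared term is at most twice the corresponding absolute term, whence $y^* L y \le \sum_{(i,j)\in E}|y_i\conj{y_j}-1| \le \delta$. Decomposing $y = c\,\mathbf{1} + y_\perp$ with $y_\perp \perp \mathbf{1}$ and using $L\mathbf{1}=0$ together with the spectral gap on the orthogonal complement yields $\|y_\perp\|^2 \le (y^* L y)/\lambda_2 \le \delta/\lambda_2$.

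It remains to fix the global phase. Because $\|y\|^2 = n$, the split $n = |c|^2 n + \|y_\perp\|^2$ forces $1 - |c| \le \|y_\perp\|^2/n$; choosing $e^{i\alpha} = c/|c|$ then gives $\min_\alpha \|y - e^{i\alpha}\mathbf{1}\|^2 = n(1-|c|)^2 + \|y_\perp\|^2 \le \|y_\perp\|^2\,(1 + \|y_\perp\|^2/n)$. The hypothesis $\delta \le n\lambda_2$ forces $\|y_\perp\|^2 \le n$, so the parenthesis is at most $2$, and undoing the gauge gives $\|x - e^{i\alpha}x_0\| = \|y - e^{i\alpha}\mathbf{1}\| \le \sqrt{2\delta/\lambda_2}$, comfortably inside the stated constant.

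For the lifted problem \eqref{eq:P2} I would run the same computation at the matrix level. With $\tilde X = P^* X P \succeq 0$ of unit diagonal and $\sum_{(i,j)\in E}|\tilde X_{ij} - 1| \le \delta$, a short calculation gives $\tr(\tilde X L) = \sum_{(i,j)\in E}(1 - \Re \tilde X_{ij}) \le \delta$, and the operator inequality $L \succeq \lambda_2 P_\perp$ with $P_\perp = I - \tfrac1n \mathbf{1}\mathbf{1}^*$ then yields $\tr(\tilde X P_\perp) \le \delta/\lambda_2$. I expect the delicate step to be the passage from $\tilde X$ to its top eigenvector $\tilde x$ (normalized to $\|\tilde x\|^2 = n$): one must show the leading eigenvalue $\mu_1$ is close to $n$, so that the top eigenvector is spectrally well separated and nearly aligned with $\mathbf{1}$. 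The Rayleigh bound $\mu_1 \ge \mathbf{1}^* \tilde X \mathbf{1}/n = n - \tr(\tilde X P_\perp) \ge n - \delta/\lambda_2$, together with $\mu_1\,\beta_1 \le \tr(\tilde X P_\perp)$ for the orthogonal energy $\beta_1$ of the top eigenvector, controls how far $\tilde x$ is from a multiple of $\mathbf{1}$; this is precisely where the hypothesis $\delta \le n\lambda_2$ is needed to keep $\mu_1$ bounded away from degeneracy, and where the larger constant $4$ (rather than the $\sqrt 2$ of the feasibility-only argument) is absorbed. The concluding global-phase adjustment is then identical to the vector case.
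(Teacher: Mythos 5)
Your argument is correct in substance, and it takes a genuinely different route from the paper's. The paper never treats (\ref{eq:P1}) on its own: it observes that any feasible $x$ for (\ref{eq:P1}) lifts to a feasible $xx^*$ for (\ref{eq:P2}), and then runs a single matrix argument built on the phase-conjugated Laplacian $\mathcal{L} = \Lambda L \Lambda^*$, $\Lambda = \mbox{diag}(x_0)$: it shows $\< x_0 x_0^*, \mathcal{L} \> = 0$, bounds the error pairing by $\| \tilde{\eps} \|_1$, uses a convex-combination lemma (lemma \ref{teo:one}) to deduce $\< X/n, v_1 v_1^* \> \geq 1 - \| \tilde{\eps} \|_1/(n \lambda_2)$, converts this to the Frobenius bound $\| X/n - v_1 v_1^* \|_F^2 \leq 2 \| \tilde{\eps} \|_1/(n \lambda_2)$, and finishes with a Davis--Kahan-type eigenvector lemma (lemma \ref{teo:sintheta}), the constant $4$ arising as $2\sqrt{2} \cdot \sqrt{2}$. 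Your gauge transformation $\tilde{X} = P^* X P$ is the same device as the paper's conjugation of $L$ (one conjugates the unknown, the other the Laplacian), and your inequality $\tr(\tilde{X} P_\perp) \leq \delta/\lambda_2$ via $L \succeq \lambda_2 P_\perp$ is exactly the content of lemma \ref{teo:one}, since $\tr(\tilde{X} P_\perp) = n(1 - c_1)$. What is genuinely different, and a real improvement, is your direct treatment of (\ref{eq:P1}): the identity $|y_i - y_j| = |y_i \conj{y_j} - 1|$ plus the Dirichlet form $y^* L y$ gives a fully rigorous bound $\sqrt{2\delta/\lambda_2}$, valid up to and including $\delta = n\lambda_2$, with no lifting and a better constant. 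Also different is your eigenvector-extraction step for (\ref{eq:P2}), which replaces the Frobenius/Davis--Kahan machinery by the Rayleigh bound $\mu_1 \geq n - \tr(\tilde{X} P_\perp)$ and the energy bound $\mu_1 \beta_1 \leq \tr(\tilde{X} P_\perp)$.

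That last step is the one place where your write-up is incomplete, and your stated reason for why it works is not quite right: the hypothesis $\delta \leq n\lambda_2$ does \emph{not} keep $\mu_1$ away from degeneracy, since your bound $\mu_1 \geq n - \delta/\lambda_2$ can vanish at the boundary, making $\beta_1 \leq \tr(\tilde{X} P_\perp)/\mu_1$ vacuous. The patch is a two-case split. If $\delta/\lambda_2 \geq n/4$, the theorem is trivial: $\| x - e^{i\alpha} x_0 \| \leq \| x \| + \| x_0 \| = 2\sqrt{n} \leq 4 \sqrt{\delta/\lambda_2}$ for any $\alpha$. If $\delta/\lambda_2 < n/4$, then $\mu_1 > \tfrac{3}{4} n$, hence $\beta_1 \leq \tfrac{4\delta}{3 n \lambda_2} < \tfrac{1}{3}$, and your phase-adjustment computation (identical to the vector case) gives $\| x - e^{i\alpha} x_0 \|^2 \leq n \, \beta_1 (1 + \beta_1) \leq \tfrac{16}{9} \, \delta/\lambda_2$, well within the stated constant. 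With this split your sketch closes completely. In fairness, the paper's own proof elides the same regime: lemma \ref{teo:sintheta} requires $\| X/n - v_1 v_1^* \| < 1/2$, while the hypothesis of theorem \ref{teo:main1} only guarantees $\| X/n - v_1 v_1^* \|_F \leq \sqrt{2}$; the same trivial-regime observation is what repairs it there as well.
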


The proof is in the appendix. Manifestly, recovery is exact (up to the global phase ambiguity encoded by the parameter $\alpha$, because the algorithm could return another vector multiplied by some $e^{i\alpha}$ over which there is no control.) as soon as $\eps = 0$ and $\sigma = 0$, provided $\lambda_2 \ne 0$, i.e., the graph $G$ is connected. The easiest way to construct expander graphs (graphs with large $\lambda_2$) is to set up a probabilistic model with a Bernoulli distribution for each edge in an i.i.d. fashion, a model known as the Erd\H{o}s-R\'{e}nyi random graph. It can be shown that such graphs have a spectral gap bounded away from zero, independently of $n$ and with high probability, with $m = O(n \log n)$ edges.

A stronger result is available when the noise $\eps$ originates at the level of $x_0$, i.e.,  $B = x_0 x_0^* + \eps$ has the form $(x_0 + e) (x_0 + e)^*$.

\begin{corollary}\label{teo:main1cor}
Assume $\eps = (x_0 + e) (x_0 + e)^* - x_0 x_0^*$ and $\sigma \leq n \lambda_2$, where $\lambda_2$ is the second eigenvalue of the graph Laplacian $L$ on $G$. Any solution $x$ of (\ref{eq:P1}) or (\ref{eq:P2}) obeys
\[
\| x - e^{i\alpha} x_0 \| \leq 4 \; \sqrt{\frac{\sigma}{\lambda_2}} + \| e \|,
\]
for some $\alpha \in [0,2 \pi)$.
\end{corollary}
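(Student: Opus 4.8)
The plan is to turn the corollary's hypothesis into a \emph{noiseless} recovery statement. Writing $y_0 = x_0 + e$, the assumption $\eps = (x_0+e)(x_0+e)^* - x_0 x_0^*$ means that on $E$ the data is $B = y_0 y_0^*$, i.e.\ $B_{ij} = (y_0)_i\conj{(y_0)_j}$ exactly. The first step is to peel off the measurement error at the level of vectors: for every phase $\alpha$,
\[
\| x - e^{i\alpha} x_0 \| \le \| x - e^{i\alpha} y_0 \| + \| e^{i\alpha}(y_0 - x_0) \| = \| x - e^{i\alpha} y_0 \| + \| e \| ,
\]
because $\| e^{i\alpha}(y_0-x_0)\| = \|e\|$ regardless of $\alpha$. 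Taking $\alpha$ to be the minimizer of $\|x - e^{i\alpha}y_0\|$, the additive term $\|e\|$ is then obtained for free, and it only remains to establish the noiseless bound $\min_\alpha\| x - e^{i\alpha} y_0 \| \le 4\sqrt{\sigma/\lambda_2}$.

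This is precisely the content of Theorem \ref{teo:main1} with $\eps = 0$, now with $y_0$ in the role of the ground truth: any solution $x$ of (\ref{eq:P1}) or (\ref{eq:P2}) has residual $\sum_{(i,j)\in E}|x_i\conj{x_j} - (y_0)_i\conj{(y_0)_j}| \le \sigma$ against $y_0 y_0^*$. I would therefore rerun that proof. With $a_i := |(y_0)_i|$ and the unit-modulus phase ratio $u_i := x_i\,\conj{(y_0)_i}/a_i$, a direct expansion gives the exact splitting
\[
|x_i\conj{x_j} - (y_0)_i\conj{(y_0)_j}|^2 = (1 - a_i a_j)^2 + a_i a_j\,|u_i - u_j|^2 .
\]
The second term feeds the Dirichlet energy $u^* L u = \tfrac12\sum_{(i,j)\in E}|u_i - u_j|^2$, which is thus bounded by the residual, and the spectral-gap inequality $\lambda_2\,\| u - \<v_1,u\> v_1\|^2 \le u^* L u$ (with $v_1 = \mathbf 1/\sqrt n$) turns this into closeness of $u$ to a constant, hence of $x$ to a global phase of the \emph{phases} of $y_0$.

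The one genuinely delicate point — and the reason the corollary needs its own argument rather than a bare citation — is that $y_0$ is not of unit modulus, so Theorem \ref{teo:main1} does not apply verbatim. The first term $(1-a_ia_j)^2$ in the splitting is exactly the amplitude defect of $y_0$, and I expect it to be harmless: the reverse triangle inequality $\big|\,|x_i\conj{x_j}| - |(y_0)_i\conj{(y_0)_j}|\,\big| \le |x_i\conj{x_j} - (y_0)_i\conj{(y_0)_j}|$ gives $\sum_{(i,j)\in E}|1 - a_i a_j| \le \sigma$, which controls $\| \mathbf 1 - a\|$ through the same spectral gap and shows it is of the same order $\sqrt{\sigma/\lambda_2}$ as the phase error. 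Under the standing assumption $\sigma \le n\lambda_2$ both contributions are small, and the main technical burden is to check that they recombine under the stated constant $4$; the amplitude mismatch never contaminates the $\lambda_2$-dependence, and the clean $\|e\|$ is already secured by the first step.
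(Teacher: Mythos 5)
Your opening step is exactly the paper's proof, which in its entirety reads: ``Apply theorem \ref{teo:main1} with $\eps = 0$, $x_0 + e$ in place of $x_0$, then use the triangle inequality.'' The point you flag as genuinely delicate --- that $y_0 = x_0 + e$ need not have unit-modulus entries --- is settled in the paper by an implicit hypothesis rather than by extra analysis: the sentence immediately following the corollary asserts that $x = x_0 + e$ is a solution of (\ref{eq:P1}) even when $\sigma = 0$, and the feasible set of (\ref{eq:P1}) contains only unit-modulus vectors. So the corollary is intended for perturbations $e$ that keep $x_0+e$ on the torus $\{|x_i|=1\}$; under that reading Theorem \ref{teo:main1} applies verbatim to $y_0$, for solutions of either (\ref{eq:P1}) or (\ref{eq:P2}), and everything in your proposal after the triangle inequality is unnecessary.

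If you insist on arbitrary $e$, the problem is not merely that the citation fails: your own reduction breaks. The subgoal you set, $\min_\alpha \| x - e^{i\alpha} y_0 \| \leq 4 \sqrt{\sigma/\lambda_2}$, is then \emph{false}. Take the star graph ($\lambda_2 = 1$) and $a_i := |(y_0)_i|$ equal to $t$ at the center and $1/t$ at every leaf, with arbitrary phases: every product $(y_0)_i \conj{(y_0)_j}$ on an edge then has modulus one, so (\ref{eq:P1}) is feasible with $\sigma = 0$, yet any unit-modulus $x$ satisfies $\| x - e^{i\alpha} y_0 \| \geq \| \mathbf{1} - a \| \to \infty$ as $t \to 0$. (The corollary itself survives because $\| e \| \geq \| \mathbf{1} - a \|$ is equally large, but you have already spent the whole $\| e \|$ budget in your first step, so none of it remains to absorb this.) The same example refutes your amplitude-control claim: $\sum_{(i,j)\in E} |1 - a_i a_j| \leq \sigma$ does \emph{not} control $\| \mathbf{1} - a \|$ through the spectral gap, because $1 - a_i a_j$ is not a difference across an edge --- on any bipartite graph it vanishes identically for $a = t$ on one side and $1/t$ on the other. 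The amplitude defect must instead be charged directly to the noise, via $|1 - a_i| = \bigl| \, |(x_0)_i| - |(y_0)_i| \, \bigr| \leq |e_i|$, and the phase (Dirichlet) estimate must be run with the weights $a_i a_j$, i.e., with the data-weighted Laplacian of Theorems \ref{teo:main2} and \ref{teo:main3} rather than with $L$: your passage from $\sum_{(i,j) \in E} \sqrt{a_i a_j}\, |u_i - u_j| \leq \sigma$ to the unweighted energy $u^* L u$ costs a factor $1/\min_{(i,j)\in E} \sqrt{a_i a_j}$, which is unbounded exactly when some amplitude is small. None of this restructuring appears in your sketch, it would not obviously reproduce the constants $4$ and $1$, and in any case your vector-level argument covers only (\ref{eq:P1}), whereas the corollary is also claimed for the lifted problem (\ref{eq:P2}), whose solutions need not be rank one. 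The short way out is to adopt the paper's reading and stop after your first displayed inequality.
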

\begin{proof} Apply theorem \ref{teo:main1} with $\eps = 0$,  $x_0 + e$ in place of $x_0$, then use the triangle inequality.
\end{proof}

In the setting of the corollary, problem (\ref{eq:P1}) always has $x = x_0 + e$ as a solution, hence is feasible even when $\sigma = 0$.

Let us briefly review the eigenvector method for interferometric recovery. In \cite{Singer}, Singer proposed to use the first eigenvector of the (noisy) data-weighted graph Laplacian as an estimator of the vector of phases. A similar idea appears in the work of Montanari et al. as the first step of their OptSpace algorithm \cite{OptSpace}, and in the work of Chatterjee on universal thresholding \cite{Chatterjee}. In our setting, this means defining 
\[ 
( \tilde{\mathcal{L}} \, )_{ij} = \left\{ \begin{array}{ll}
         d_i & \mbox{if $i = j$};\\
        - B_{ij} & \mbox{if $(i,j) \in E$};\\
        0 & \mbox{otherwise},\end{array} \right.
\]
and letting $x = \tilde{v}_1 \sqrt{n}$ where $v_1$ is the unit-norm eigenvector of $\tilde{\mathcal{L}}$ with smallest eigenvalue. Denote by $\tilde{\lambda}_1 \leq \tilde{\lambda}_2 \leq  \ldots$ the eigenvalues of $\tilde{\mathcal{L}}$. The following result is known from \cite{Bandeira}, but we provide an elementary proof (in the appendix) for completeness of the exposition.

\begin{theorem}\label{teo:eigenvector}
Assume $\| \eps \| \leq \tilde{\lambda}_2 / 2$. Then the result $x$ of the eigenvector method obeys
\[
\| x - e^{i\alpha} x_0 \| \leq \sqrt{2 n} \, \frac{\| \eps \|}{\tilde{\lambda}_2},
\]
for some $\alpha \in [0,2 \pi)$.
\end{theorem}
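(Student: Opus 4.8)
I read $\|\eps\|$ as the spectral (operator) norm of the noise matrix, and the plan is a Davis--Kahan $\sin\theta$ argument tailored to this gauged Laplacian. The first observation is that the noise-free data-weighted Laplacian $\mathcal{L}$, whose off-diagonal entries on $E$ are $-(x_0)_i\conj{(x_0)_j}$, annihilates $x_0$: one checks directly that $(\mathcal{L}x_0)_i = d_i (x_0)_i - \sum_{j:(i,j)\in E}(x_0)_i|(x_0)_j|^2 = 0$ since $|(x_0)_j| = 1$. Equivalently, conjugating by the unitary phase matrix $U = \mathrm{diag}((x_0)_k)$ gives $U^* \mathcal{L} U = L$, the ordinary graph Laplacian, so $\mathcal{L}$ is unitarily similar to $L$ and its bottom eigenvector (eigenvalue $0$) is the unit vector $u_0 = x_0/\sqrt{n}$. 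Writing $\tilde{\mathcal{L}} = \mathcal{L} - \mathcal{E}$, where $\mathcal{E}$ is the Hermitian noise matrix supported on $E$ with entries $\eps_{ij}$ and $\|\mathcal{E}\| = \|\eps\|$, the goal becomes bounding the angle $\theta$ between $u_0$ and the bottom eigenvector $\tilde{v}_1$ of $\tilde{\mathcal{L}}$.

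The key computation is short. Since $\mathcal{L}u_0 = 0$, we have $\tilde{\mathcal{L}}u_0 = -\mathcal{E}u_0$, with $\|\mathcal{E}u_0\| \leq \|\eps\|$. Decompose $u_0 = \tilde{c}\,\tilde{v}_1 + \tilde{w}$ with $\tilde{w} \perp \tilde{v}_1$, so that $\|\tilde{w}\| = \sin\theta$. Applying $\tilde{\mathcal{L}}$ gives $\tilde{\mathcal{L}}u_0 = \tilde{c}\,\tilde{\lambda}_1 \tilde{v}_1 + \tilde{\mathcal{L}}\tilde{w}$, and these two summands are orthogonal because $\tilde{\mathcal{L}}\tilde{w}$ stays in $\mathrm{span}(\tilde{v}_1)^\perp$. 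Since $\tilde{w}$ lies in the span of eigenvectors of $\tilde{\mathcal{L}}$ with eigenvalues $\geq \tilde{\lambda}_2 > 0$, we get $\|\tilde{\mathcal{L}}\tilde{w}\| \geq \tilde{\lambda}_2\|\tilde{w}\|$. Comparing norms in $-\mathcal{E}u_0 = \tilde{c}\,\tilde{\lambda}_1\tilde{v}_1 + \tilde{\mathcal{L}}\tilde{w}$ yields $\tilde{\lambda}_2^2\|\tilde{w}\|^2 \leq \|\tilde{\mathcal{L}}\tilde{w}\|^2 \leq \|\mathcal{E}u_0\|^2 \leq \|\eps\|^2$, hence
\[
\sin\theta = \|\tilde{w}\| \leq \frac{\|\eps\|}{\tilde{\lambda}_2}.
\]

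It remains to convert the angle bound into the stated estimate and to remove the global phase. Optimizing over $\alpha$, $\min_\alpha \|\tilde{v}_1 - e^{i\alpha}u_0\|^2 = 2\bigl(1 - |\langle \tilde{v}_1, u_0\rangle|\bigr) = 2(1-\cos\theta)$. Here the hypothesis $\|\eps\| \leq \tilde{\lambda}_2/2$ enters: it forces $\sin\theta \leq 1/2$, so $\cos\theta \geq 0$ and therefore $1-\cos\theta \leq (1-\cos\theta)(1+\cos\theta) = \sin^2\theta$. Thus $\min_\alpha\|\tilde{v}_1 - e^{i\alpha}u_0\| \leq \sqrt{2}\,\sin\theta$. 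Finally, since $x = \tilde{v}_1\sqrt{n}$ and $x_0 = u_0\sqrt{n}$, multiplying by $\sqrt{n}$ gives $\|x - e^{i\alpha}x_0\| \leq \sqrt{2n}\,\sin\theta \leq \sqrt{2n}\,\|\eps\|/\tilde{\lambda}_2$, as claimed.

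The main point requiring care, rather than a genuine obstacle, is the role of the hypothesis $\|\eps\| \leq \tilde{\lambda}_2/2$. It does double duty: it guarantees $\tilde{\lambda}_2 > 0$ (otherwise $\|\eps\| = 0$ and recovery is exact), which legitimizes the lower bound $\|\tilde{\mathcal{L}}\tilde{w}\| \geq \tilde{\lambda}_2\|\tilde{w}\|$ on the orthogonal complement, and it ensures $\sin\theta \leq 1/2$, so that $\tilde{v}_1$ genuinely aligns with $u_0$ (is not nearly orthogonal to it) and the inequality $1-\cos\theta \leq \sin^2\theta$ holds. One also uses that $\mathcal{E}$, and hence $\tilde{\mathcal{L}}$, is Hermitian—which follows from $B$ being Hermitian—so that $\tilde{\mathcal{L}}$ admits an orthonormal eigenbasis and the spectral decomposition of $\tilde{\mathcal{L}}\tilde{w}$ used above is valid.
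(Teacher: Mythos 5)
Your proof is correct and takes essentially the same route as the paper's: the paper likewise uses $\mathcal{L}v_1=0$ to get the residual $\tilde{\mathcal{L}}v_1 = r_1$ with $\|r_1\|\le\|\eps\|$, and bounds the mass of $v_1$ on the eigenvectors $\tilde{v}_k$, $k\ge 2$, by $\|\eps\|^2/\tilde{\lambda}_2^2$, which is exactly your $\sin^2\theta$ estimate written componentwise in the eigenbasis of $\tilde{\mathcal{L}}$ instead of via the operator lower bound on $\mathrm{span}(\tilde{v}_1)^\perp$. The concluding step is also identical: $2\bigl(1-|\langle \tilde{v}_1, v_1\rangle|\bigr) \le 2\bigl(1-|\langle \tilde{v}_1, v_1\rangle|^2\bigr)$ followed by scaling by $\sqrt{n}$.
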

Alternatively, we may express the inequality in terms of $\lambda_2$, the spectral gap of the noise-free Laplacian $L$ defined earlier, by noticing\footnote{This owes to $\| \mathcal{L} - \tilde{\mathcal{L}} \| \leq \| \eps \|$, with $\mathcal{L} = \Lambda L \Lambda^*$ the noise-free Laplacian with phases introduced at the beginning of section 2.1.} that $\tilde{\lambda}_2 \geq \lambda_2 - \| \eps \|$. Both $\lambda_2$ and $\tilde{\lambda}_2$ are computationally accessible. In the case when $|B_{ij}| = 1$, we have $\tilde{\lambda}_1 \geq 0$, hence $\tilde{\lambda}_2$ is (slightly) greater than the spectral gap $\tilde{\lambda}_2 - \tilde{\lambda}_1$ of $\tilde{\mathcal{L}}$. Note that the $1/\tilde{\lambda}_2$ scaling appears to be sharp in view of the numerical experiments reported in section \ref{sec:num}. The inverse square root scaling of theorem \ref{teo:main1} is stronger in the presence of small spectral gaps, but the noise scaling is weaker in theorem \ref{teo:main1} than in theorem \ref{teo:eigenvector}.

\subsection{Interferometric recovery}

The more general version of the interferometric recovery problem is to consider a left-invertible tall matrix $A$, linear measurements $b = Ax_0$ for some vector $x_0$ (without condition on the modulus of either $b_i$ or $(x_0)_i$), noisy interferometric measurements $B_{ij} = b_i \conj{b_j} + \eps_{ij}$ for $(i,j)$ in some set $E$, and find $x$ subject to
\beq\label{eq:P3}
\sum_{(i,j) \in E \cup D} |(Ax)_i \conj{(Ax)_j} - B_{ij}| \leq \sigma.
\eeq
Notice that we now take the union of the diagonal $D = \{ (i,i) \}$ with $E$. Without loss of generality we assume that $\eps_{ij} = \conj{\eps_{ji}}$, which can be achieved by symmetrizing the measurements, i.e., substituting $\frac{B_{ij} + B_{ji}}{2}$ for $B_{ij}$.

Since we no longer have a unit-modulus condition, the relevant notion of graph Laplacian is now data-dependent. It reads
\[ 
\left( L_{|b|} \right)_{ij} = \left\{ \begin{array}{ll}
         \sum_{k:(i,k) \in E} |b_k|^2 & \mbox{if $i = j$};\\
        - |b_i| |b_j| & \mbox{if $(i,j) \in E$};\\
        0 & \mbox{otherwise}.\end{array} \right. 
\]
The connectedness properties of the underlying graph now depend on the size of $|b_i|$: the edge $(i,j)$ carries valuable information if and only if both $|b_i|$ and $|b_j|$ are large. 

A few different recovery formulations arise naturally in the context of lifting and semidefinite relaxation. 

\bit
\item The basic lifted formulation is to find some $X$ such that 
\begin{align}\label{eq:P4}
&\sum_{(i,j) \in E \cup D} | (A X A^*)_{ij} -  B_{ij} | \leq \sigma, \qquad X \succeq 0, \notag \\
&\mbox{then let $x = x_1 \sqrt{\eta_1}$, where $(\eta_1, x_1)$ is the} \notag \\ 
&\mbox{top eigen-pair of $X$.}
\end{align}
Our main result is as follows, see the appendix for the proof.

\begin{theorem}\label{teo:main2}
Assume $\| \eps \|_1 + \sigma \leq \lambda_2/2$, where $\lambda_2$ is the second eigenvalue of the data-weighted graph Laplacian $L_{|b|}$. Any solution $x$ of (\ref{eq:P4}) obeys
\[
% \| x - e^{i\alpha} x_0 \| \leq 27 \, \| x_0 \| \; \kappa(A)^2 \; \sqrt{\frac{\| \eps \|_1 + \sigma}{\lambda_2}}, VJ change of constant
\frac{\| x - e^{i\alpha} x_0 \|}{\| x_0 \|} \leq 15 \; \kappa(A)^2 \; \sqrt{\frac{\| \eps \|_1 + \sigma}{\lambda_2}},
\]
for some $\alpha \in [0,2 \pi)$, and where $\kappa(A)$ is the condition number of $A$.
\end{theorem}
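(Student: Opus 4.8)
My plan is to run the spectral-gap argument behind Theorem~\ref{teo:main1} in the measurement space $\C^m$, where $bb^*$ lives, and then transport the conclusion back to $\C^n$ through the left inverse of $A$, paying a factor $\kappa(A)^2$ for doing so. Write $Y=AXA^*$, $\delta=\|\eps\|_1+\sigma$, and let $Q_b=I-bb^*/\|b\|^2$ and $Q_{x_0}=I-x_0x_0^*/\|x_0\|^2$ be the projections off $b$ and off $x_0$. Since $B=bb^*+\eps$ on $E\cup D$, the triangle inequality converts feasibility of (\ref{eq:P4}) into $\sum_{(i,j)\in E\cup D}|Y_{ij}-(bb^*)_{ij}|\le\delta$; restricting the sum to the diagonal $D$ also gives the scalar estimate $|\tr Y-\|b\|^2|\le\delta$, which I will need to fix the scale of the recovered vector.

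For the spectral step I would conjugate the data Laplacian by the phases of $b$: with $\Phi=\mbox{diag}(b_i/|b_i|)$, set $\mathcal{L}=\Phi L_{|b|}\Phi^*$, whose off-diagonal entries on $E$ are $-b_i\conj{b_j}$ and whose diagonal is unchanged. A short computation shows $\mathcal{L}\succeq0$ and $\mathcal{L}b=0$, and because the hypothesis $\delta\le\lambda_2/2$ forces $\lambda_2>0$ the kernel is exactly $\mbox{span}(b)$, so $\mathcal{L}\succeq\lambda_2Q_b$. Two structural facts do the work: $\tr(\mathcal{L}\,bb^*)=b^*\mathcal{L}b=0$, and $\mathcal{L}$ is supported on $E\cup D$, so that $\tr(\mathcal{L}Y)=\tr(\mathcal{L}(Y-bb^*))$ only sees the residual entries the data constraint controls. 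The quantitative point that keeps the estimate clean is that every entry of $\mathcal{L}$ is at most $\|b\|^2$ in modulus: the diagonal entries $\sum_{k:(i,k)\in E}|b_k|^2$ are partial sums of $\|b\|^2$, and $|b_i||b_j|\le\|b\|^2$ off the diagonal. Hence $\lambda_2\,\tr(Q_bYQ_b)\le\tr(\mathcal{L}Y)\le\|b\|^2\delta$, i.e. the energy of $Y$ transverse to $b$ is at most $\|b\|^2\delta/\lambda_2$.

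To return to signal space I would use left-invertibility twice. First, for every $v$ one has $\|Q_bAv\|^2=\min_c\|A(v-cx_0)\|^2\ge\sigma_{\min}(A)^2\|Q_{x_0}v\|^2$, i.e. $A^*Q_bA\succeq\sigma_{\min}(A)^2Q_{x_0}$; applied to $X\succeq0$ this yields $\tr(Q_{x_0}XQ_{x_0})\le\tr(Q_bYQ_b)/\sigma_{\min}(A)^2$, and with $\|b\|^2\le\|A\|^2\|x_0\|^2$ we get $\tr(Q_{x_0}XQ_{x_0})\le\kappa(A)^2(\delta/\lambda_2)\|x_0\|^2$. Thus every eigenvector of $X=\sum_k\eta_kx_kx_k^*$ with appreciable weight is nearly aligned with $x_0$. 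Writing $c_k=|\langle x_k,x_0/\|x_0\|\rangle|^2$, the bound reads $\sum_k\eta_k(1-c_k)\le\kappa(A)^2(\delta/\lambda_2)\|x_0\|^2$, which in particular controls $1-c_1$ and, since the $x_k$ are orthonormal, the tail $\sum_{k\ge2}\eta_k$. Feeding this and the eigenvalue $\eta_1$ into the identity $\|\sqrt{\eta_1}x_1-e^{i\alpha}x_0\|^2=\eta_1+\|x_0\|^2-2\sqrt{\eta_1}\,\|x_0\|\sqrt{c_1}$ (with $\alpha$ aligning the phases) and dividing by $\|x_0\|^2$ should deliver the stated inequality.

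I expect the last step to be the real obstacle. Unlike in Theorem~\ref{teo:main1}, the diagonal of $Y$ is not pinned exactly but only in $\ell_1$, so the scale information $|\tr Y-\|b\|^2|\le\delta$ must be converted into a two-sided estimate $\eta_1=\|x_0\|^2(1+O(\cdot))$ across the map $A$; this requires comparing $\|Ax_1\|^2$ with $\|b\|^2/\|x_0\|^2$ using the already-established alignment of $x_1$ with $x_0$ and bounding $\sum_{k\ge2}\eta_k\|Ax_k\|^2$, and it closes only as a self-consistent (bootstrap) estimate because the hypothesis merely gives $\delta/\lambda_2\le1/2$ rather than smallness. It is precisely this eigenvalue-matching bookkeeping that produces the remaining powers of $\kappa(A)$ — completing the factor $\kappa(A)^2$ already visible in the transverse-energy bound — and fixes the explicit constant $15$. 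A minor preliminary is to check that $\lambda_2>0$ indeed makes the kernel of $\mathcal{L}$ one-dimensional, so that $\mathcal{L}\succeq\lambda_2Q_b$ is legitimate.
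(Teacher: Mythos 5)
Your measurement-space argument and your transfer inequality are both correct, and they are in substance the same as the paper's first half: your phased Laplacian $\mathcal{L}=\Phi L_{|b|}\Phi^*$ is the paper's $\mathcal{L}_b$, and your bound $\lambda_2\,\tr(Q_bYQ_b)\le\|b\|^2\delta$ (with $\delta=\|\eps\|_1+\sigma$) is equivalent to the paper's inequality (\ref{eq:Ybound}), since $\|Y/\tr Y-v_1v_1^*\|_F^2\le 2\,\tr(Q_bYQ_b)/\tr Y$ for $Y\succeq0$ and $\tr Y\ge\|b\|^2/2$; the observation $A^*Q_bA\succeq\sigma_{\min}(A)^2Q_{x_0}$ is a nice reformulation of the paper's step of conjugating by $A^+$. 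The genuine gap is the step you yourself flag as ``the real obstacle'': the two-sided estimate $|\eta_1-\|x_0\|^2|\lesssim\kappa^2\sqrt{\delta/\lambda_2}\,\|x_0\|^2$ is never established, and that is where the theorem's content (the power $\kappa^2$ and the constant $15$) lives. The obstruction is structural, not bookkeeping: passing to $\tr(Q_{x_0}XQ_{x_0})$ keeps only the mass of $X$ \emph{transverse} to $x_0$ and discards all information about the mass \emph{along} $x_0$, so the scale must be rebuilt from $|\tr Y-\|b\|^2|\le\delta$ by pushing vectors forward through $A$, and each forward pass costs a factor $\|A\|\|x_0\|/\|b\|\le\kappa$. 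Concretely, your tail bound $\sum_{k\ge 2}\eta_k\le 2\kappa^2(\delta/\lambda_2)\|x_0\|^2$ holds only under the bootstrap hypothesis $c_1\ge 1/2$, and converting it into a statement about $\tr Y$ gives $\sum_{k\ge 2}\eta_k\|Ax_k\|^2\le 2\kappa^4(\delta/\lambda_2)\|b\|^2$; likewise, excluding $c_1<1/2$ by contradiction against $\tr Y\ge\|b\|^2-\delta$ requires $\kappa^4\delta/\lambda_2$ to be smaller than a fixed constant. But the hypothesis $\delta\le\lambda_2/2$ does not provide that: in the only regime where the claim is not already implied by the a priori bound $\|x\|\le\sqrt{3/2}\,\kappa\|x_0\|$, one gets $\kappa\sqrt{\delta/\lambda_2}$ small for free, yet $\kappa^4\delta/\lambda_2=\kappa^2\,(\kappa\sqrt{\delta/\lambda_2})^2$ can still be arbitrarily large as $\kappa\to\infty$. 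So your bootstrap cannot close from the stated assumptions, and the route through comparisons of $\|Ax_1\|^2$ with $\|b\|^2/\|x_0\|^2$ degrades the exponent of $\kappa$ beyond the claimed $\kappa^2$.

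The paper sidesteps this by transferring the \emph{full matrix bound} rather than the projected one: conjugating (\ref{eq:Ybound}) by $A^+$ maps $v_1=b/\|b\|$ exactly to $x_0/\|b\|$, giving $\|X/\tr Y-x_0x_0^*/\|b\|^2\|_F\le 2\|A^+\|^2\sqrt{\delta/\lambda_2}$. This single inequality pins the component of $X$ along $x_0$ as well as transverse to it, at a total cost of $\|A^+\|^2\|b\|^2\le\kappa^2\|x_0\|^2$ and no more. Rescaling with $|\tr Y-\|b\|^2|=|\tr\tilde\eps|\le\delta$ yields $\|X-x_0x_0^*\|\le\frac{6+\sqrt2}{4}\,\delta'\|b\|^2$ with $\delta'=2\|A^+\|^2\sqrt{\delta/\lambda_2}$, and then Lemma \ref{teo:sintheta}(b) --- whose proof contains, via Weyl's inequalities, exactly the eigenvalue matching $|\eta_1-\|x_0\|^2|\le\|X-x_0x_0^*\|$ that your sketch is missing --- delivers the alignment and the scale simultaneously, producing the stated constant. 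If you wish to retain your projection-based formulation, the repair is the same device: recover the along-$x_0$ mass by testing $Y$ against $w=(A^+)^*x_0$, which satisfies $w^*b=\|x_0\|^2$, i.e. pull back through $A^+$ instead of pushing forward through $A$; then $u_0^*Xu_0=w^*Yw/\|x_0\|^2$ is controlled by $b^*Yb$, $\tr(Q_bYQ_b)$, and $\|A^+\|\|b\|\le\kappa\|x_0\|$ without spending extra powers of $\kappa$.
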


The quadratic dependence on $\kappa(A)$ is necessary\footnote{
The following example shows why that is the case. For any $X_0$ and invertible $A$, the solution to $AXA^* = AX_0 A^* + \eps$ is $X = X_0 + A^{+} \eps (A^*)^+$. Let $X_0 = e_1 e_1^T$, $\eps = \delta e_1  e_1^*$ for some small $\delta$, and $A^+ = I + N e_1 e_1^T$. Then $X = (1 + \delta N^2) e_1 e_1^T$, and the square root of its leading eigenvalue is $\sqrt{\eta_1} \simeq 1 + \frac{1}{2} \delta N^2$. As a result, $x$ is perturbation of $x_0$ by a quantity of magnitude $O(\delta \| A^+ \|^2)$.}. In section \ref{sec:num}, we numerically verify the inverse square root scaling in terms of $\lambda_2$. 

%The numerical experiments also indicate that the noise scaling is not in general tight -- we do not know whether this is a consequence of the choice of regularization to pick a solution in the feasibility set or not.

If the noise originates from $b + e$ rather than $bb^* + \eps$, the error bound is again improved to
\[
% \| x - e^{i\alpha} x_0 \| \leq 27 \, \| x_0 \| \; \kappa(A)^2 \; \sqrt{\frac{\sigma}{\lambda_2}} + \| A^{-1} \| \, \| e \|, VJ change of constant
\frac{\| x - e^{i\alpha} x_0 \|}{\| x_0 \|} \leq 15 \; \kappa(A)^2 \; \sqrt{\frac{\sigma}{\lambda_2}} + \kappa(A) \, \frac{\| e \|}{\| b \|},
\]
for the same reason as earlier.

\item An alternative, two-step lifting formulation is to find $x$ through $Y$ such that
\begin{align}\label{eq:P5}
&\sum_{(i,j) \in E \cup D} | Y_{ij} -  B_{ij} | \leq \sigma, \qquad Y \succeq 0, \notag \\
&\mbox{then let $x = A^+ y_1 \sqrt{\eta_1}$, where $(\eta_1, y_1)$ is the } \notag \\ 
&\mbox{top eigen-pair of $Y$.}
\end{align}
The dependence on the condition number of $A$ is more favorable than for the basic lifting formulation.

\begin{theorem}\label{teo:main3}
Assume $\| \eps \|_1 + \sigma \leq \lambda_2/2$, where $\lambda_2$ is the second eigenvalue of the data-weighted graph Laplacian $L_{|b|}$. Any solution $x$ of (\ref{eq:P3}) or (\ref{eq:P5}) obeys
\[
% \| x - e^{i\alpha} x_0 \| \leq 27 \, \| x_0 \| \; \kappa(A) \; \sqrt{\frac{\| \eps \|_1 + \sigma}{\lambda_2}}, VJ change of constant
\frac{\| x - e^{i\alpha} x_0 \|}{\| x_0 \|} \leq 15 \; \kappa(A) \; \sqrt{\frac{\| \eps \|_1 + \sigma}{\lambda_2}},
\]
for some $\alpha \in [0,2 \pi)$.
\end{theorem}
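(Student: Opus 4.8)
The plan is to reduce Theorem \ref{teo:main3} to a data-space analogue of Theorem \ref{teo:main1} and then pull the estimate back through $A^{+}$. Formulation (\ref{eq:P5}) first produces a proxy $Y$ for $bb^{*}$ living in the $m$-dimensional data space, from which $y := \sqrt{\eta_1}\,y_1$ is extracted; the vector $x = A^{+}y$ is formed only at the very end. Since $b = Ax_0$ and $A^{+}A = I$, we have $x_0 = A^{+}b$, so $x - e^{i\alpha}x_0 = A^{+}(y - e^{i\alpha}b)$ and $\|x - e^{i\alpha}x_0\| \le \|A^{+}\|\,\|y - e^{i\alpha}b\|$, while $\|x_0\| \ge \|b\|/\|A\|$. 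Dividing produces the clean factor $\|A\|\,\|A^{+}\| = \kappa(A)$, linear rather than quadratic, precisely because the nonlinearity (lifting and eigenvector extraction) is confined to the data space and $A$ enters only through the final linear pullback. Note also that any solution of (\ref{eq:P3}) yields the rank-one feasible point $Y = (Ax)(Ax)^{*}$ of (\ref{eq:P5}) with the same misfit, and the extraction returns exactly $A^{+}(Ax) = x$; hence it suffices to treat (\ref{eq:P5}).

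It then remains to show $\|y - e^{i\alpha}b\| \le C\,\|b\|\sqrt{(\|\eps\|_1+\sigma)/\lambda_2}$, which I would prove just as Theorem \ref{teo:main1} but with the \emph{phased data-weighted Laplacian} in place of the phase-modulated combinatorial one. Let $\Theta = \mathrm{diag}(b_i/|b_i|)$ and $\mathcal{L} = \Theta\,L_{|b|}\,\Theta^{*}$; then $\mathcal{L}_{ii} = \sum_{k:(i,k)\in E}|b_k|^2$ and $\mathcal{L}_{ij} = -b_i\conj{b_j}$ on $E$, so $\mathcal{L}$ is the noise-free interferometric Laplacian, $\mathcal{L}\succeq 0$, and a short computation gives $\mathcal{L}\,b = 0$ (this is exactly why the diagonal of $L_{|b|}$ is chosen as $\sum_k|b_k|^2$, making $|b|$ its null vector). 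Consequently $\tr(\mathcal{L}\,bb^{*}) = 0$ and $\mathcal{L}\succeq \lambda_2\,(I - bb^{*}/\|b\|^2)$, since the second eigenvalue of $L_{|b|}$ is $\lambda_2$. I would then sandwich $\tr(\mathcal{L}Y)$. For the upper bound, feasibility together with $B = bb^{*}+\eps$ on $E\cup D$ gives $\|Y - bb^{*}\|_1 \le \sigma + \|\eps\|_1$ componentwise over $E\cup D$; combined with the uniform entry bound $|\mathcal{L}_{ij}|\le\|b\|^2$ and the fact that $\mathcal{L}$ is supported on $E\cup D$, this yields $\tr(\mathcal{L}Y) = \tr\big(\mathcal{L}(Y - bb^{*})\big) \le \|b\|^2(\|\eps\|_1+\sigma)$. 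For the lower bound, positivity of $Y$ and the spectral-gap inequality give $\tr(\mathcal{L}Y)\ge \lambda_2\big(\tr Y - b^{*}Yb/\|b\|^2\big)$. Hence the ``off-$b$ energy'' obeys $\tr Y - b^{*}Yb/\|b\|^2 \le \|b\|^2(\|\eps\|_1+\sigma)/\lambda_2$.

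Finally I would convert this into alignment of the leading eigenpair with $b$. The diagonal constraints pin the scale, $\big|\tr Y - \|b\|^2\big| \le \sigma + \|\eps\|_1$, so with $\delta = (\|\eps\|_1+\sigma)/\lambda_2 \le 1/2$ we get $\eta_1 \ge b^{*}Yb/\|b\|^2 \ge \|b\|^2 - O(\|b\|^2\delta)$; writing $\sin^2\phi_1 = 1 - |\langle y_1,b\rangle|^2/\|b\|^2$, the decomposition $\tr Y - b^{*}Yb/\|b\|^2 = \sum_k \eta_k \sin^2\phi_k$ forces $\eta_1\sin^2\phi_1 \le \|b\|^2\delta$, hence $\sin^2\phi_1 = O(\delta)$. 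Expanding $\|y - e^{i\alpha}b\|^2 = \eta_1 + \|b\|^2 - 2\sqrt{\eta_1}\,|\langle y_1,b\rangle|$ at the optimal phase $\alpha$ then gives $\|y - e^{i\alpha}b\|^2 = O(\|b\|^2\delta)$, the desired estimate, and tracking the constants through the pullback is what produces the explicit $15\,\kappa(A)$. I expect the main obstacle to be this last step rather than the algebra: one must ensure that a small Laplacian energy genuinely pins the \emph{top} eigenvector to $b$, which fails without a lower bound on $\eta_1$, and this is precisely where the hypothesis $\|\eps\|_1+\sigma\le\lambda_2/2$ enters, guaranteeing $\sin^2\phi_1$ stays bounded away from $1$. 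The remaining care is bookkeeping, namely keeping the uniform bound $|\mathcal{L}_{ij}|\le\|b\|^2$ instead of lossy per-entry magnitude weights, so that the factor $\|b\|^2$ cancels against the normalization by $\|b\|$ and leaves a clean constant.
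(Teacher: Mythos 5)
Your proposal is correct and follows essentially the same route as the paper: both arguments work entirely in data space with the phased data-weighted Laplacian $\mathcal{L}_b = \Lambda_\phi L_{|b|} \Lambda_\phi^*$ (whose null vector is $b$), show that feasibility forces the Laplacian energy $|\langle Y, \mathcal{L}_b\rangle| \leq \|b\|^2(\|\eps\|_1+\sigma)$, convert this via the spectral gap into alignment of the top eigenpair of $Y$ with $b$, and only at the very end pull the estimate back through $A^+$ --- which is precisely what produces the linear rather than quadratic factor $\kappa(A)$, and which also handles (\ref{eq:P3}) through the rank-one feasible point $Y=(Ax)(Ax)^*$. Your substitutions of the paper's Lemma \ref{teo:one} by the operator inequality $\mathcal{L}_b \succeq \lambda_2\left(I - bb^*/\|b\|^2\right)$, and of the Frobenius-norm bound plus Lemma \ref{teo:sintheta} by the direct decomposition $\tr Y - b^*Yb/\|b\|^2 = \sum_k \eta_k \sin^2\phi_k$, are cosmetic variants of the same spectral argument (and your looseness in the constants near $\|\eps\|_1+\sigma = \lambda_2/2$ is shared by the paper's own bookkeeping).
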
 

However, this formulation may be more computationally expensive than the one-step variant if data ($b$) space is much larger than model ($x$) space.

%\item Comment on the generalization of Singer's eigenvector method?

\eit

The quantity $\lambda_2$ is not computationally accessible in general, but it can be related to the second eigenvalue $\tilde{\lambda}_2$ of the noisy data-weighted Laplacian,
\[ 
\left( \tilde{\mathcal{L}}_{B} \right)_{ij} = \left\{ \begin{array}{ll}
         \sum_{k:(i,k) \in E} B_{kk} & \mbox{if $i = j$};\\
        - B_{ij} & \mbox{if $(i,j) \in E$};\\
        0 & \mbox{otherwise}.\end{array} \right. 
\]
It is straightforward to show that $\lambda_2 \geq \tilde{\lambda}_2 - \left[ \, (d+1) \| \eps \|_{\infty} + \| \eps \| \, \right]$, where $\| \cdot \|_{\infty}$ is the elementwise maximum on $E \cup D$, $\| \cdot \|$ is the spectral norm, and $d$ is the maximum node degree.

%\section{Algorithms}
%To solve the feasibility problems, we use a Douglas-Rachford splitting.
%Problem (\ref{eq:P3}) can be recast as finding a minimizer for 
%\[
% \imath_{\{X\succeq 0\}}(X)+\imath_{\{(AXA^*)_{ij}=B_{ij},\ (i,j)\in E\cup D\}}(X)=f(X)+g(X)
%\]
%Using that the resolvent $J_f=(I+\gamma\partial f)^{-1}$ of $f$ (resp. $g$) is the projection onto the positive semi-definite cone 
%(resp. onto the hyperplane $\{X : (AXA^*)_{ij}=B_{ij},\ (i,j)\in E\cup D\}$), the simplest algorithm to solve the problem is POCS (Projection Onto Convex Sets)
%where the iterations are 
%\[
% X_{k+1}=J_f\left(J_g(X_k)\right).
%\]
%A more efficient algorithm is the (primal-dual) Douglas-Rachford splitting, where we define $R_f=2J_f-I$, $R_g=2J_g-I$ and the iterations read 
%\begin{align*}
%& Y_{k+1}=\frac 1 2\left(R_fR_g+I\right)Y_k\\
%& X_{k}=J_g(Y_k).
%\end{align*}

%The estimates are stated with respect to the spectral gaps $\lambda_2$, $\tilde{\lambda}_2$, 
% and different norms of the noise $\eps$. \\
%LD: estimates??

\subsection{Influence of the spectral gap}

In this section, we numerically confirm the tightness of the error bound for phase recovery given by theorem \ref{teo:main1} on toy examples ($n=2^7$), with respect to the spectral gap. We perform the corresponding experiment for the situation of theorem \ref{teo:eigenvector}. In order to span a wide range of spectral gaps, three types of graphs are considered:
 \begin{itemize}
  \item the cycle graph $P_n$ which is proven to be the connected graph with the smallest spectral gap\footnote{As justified by the decreasing property of $\lambda_2$ under edge removal, mentioned earlier.};
  \item graphs obtained by adding randomly $K$ edges to $P_n$ with $K$ ranging from 1 to 50;
  \item Erd\H{o}s-R\'{e}nyi random graphs with probability ranging from 0.03 to 0.05, conditioned on connectedness (positive specrtal gap).
 \end{itemize}
A realization of the two latter types of graphs is given in figure \ref{fig:graphs}.

%\begin{figure}[htb]
\begin{figure}[H]
\centering
\includegraphics[width=.35\textwidth]{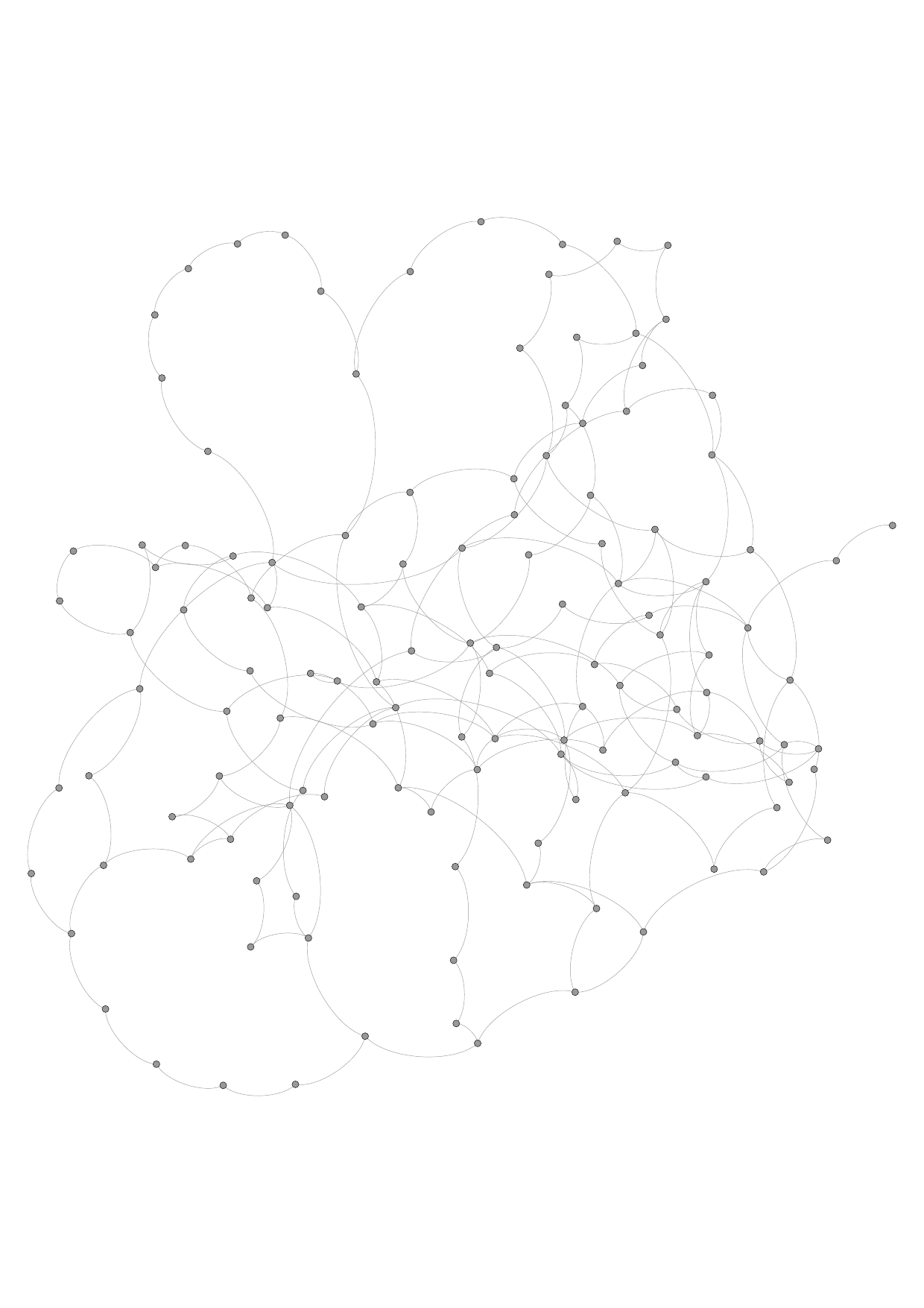} 
\includegraphics[width=.35\textwidth]{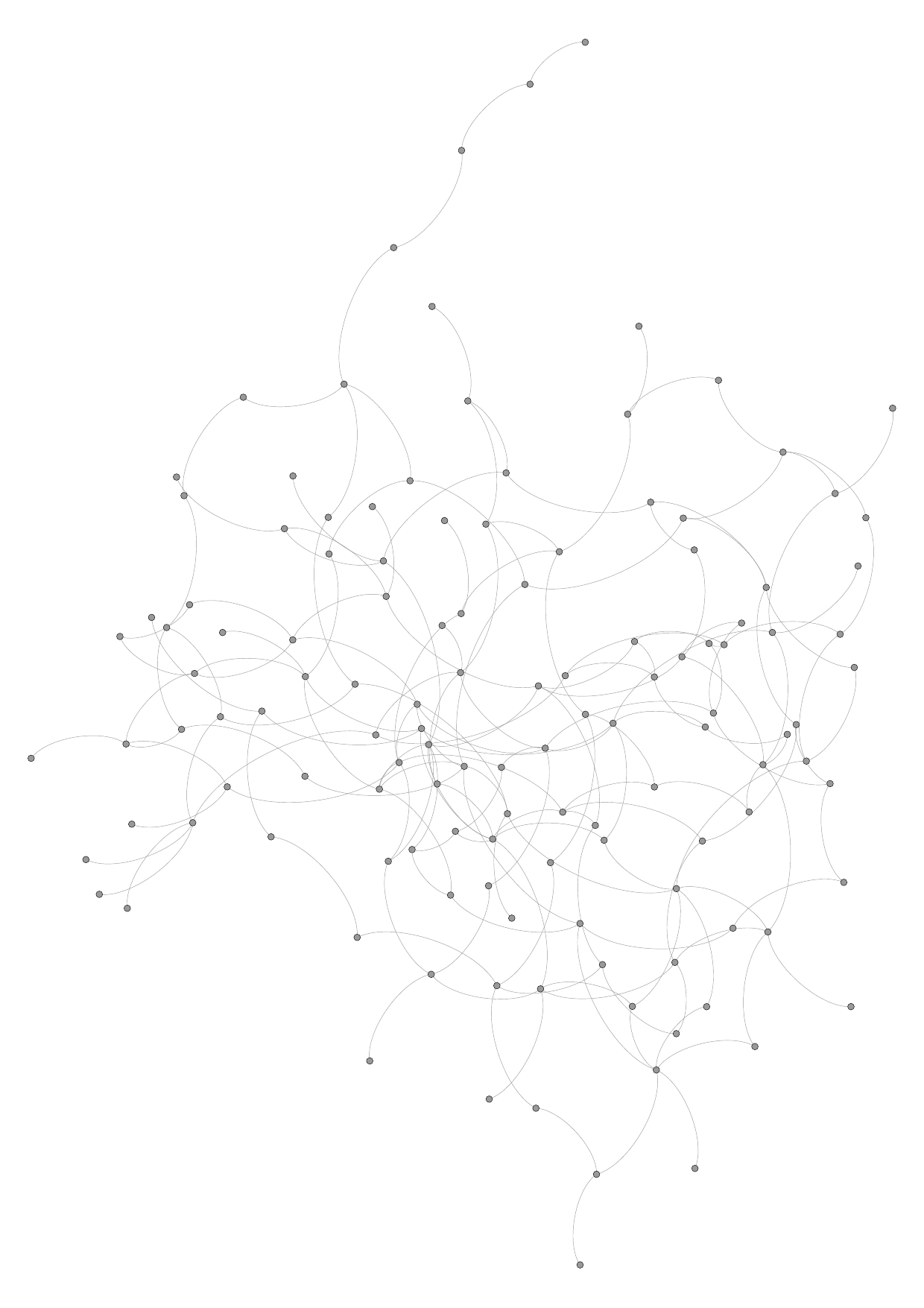}
\caption{$P_n$ + random edges (top), Erd\H{o}s-R\'{e}nyi random graph (bottom)}\label{fig:graphs}
\end{figure}

To study the eigenvector method, we draw one realization of a symmetric error matrix $\eps$ with $\eps_{ij}\sim\C\mathcal{N}(0,\eta^2)$, with $\eta=10^{-8}$. The spectral norm of the 
noise (used in theorem \ref{teo:eigenvector}) is $||\eps||\sim 2 \times 10^{-7}$.\\
For different realizations of the aforementioned graphs, we estimate the solution with the eigenvector method and plot the $\ell_2$ recovery error as a function of $\tilde{\lambda}_2$. See figure \ref{evGap}.

%\begin{figure}[htb]
\begin{figure}[H]
\includegraphics[width=\textwidth]{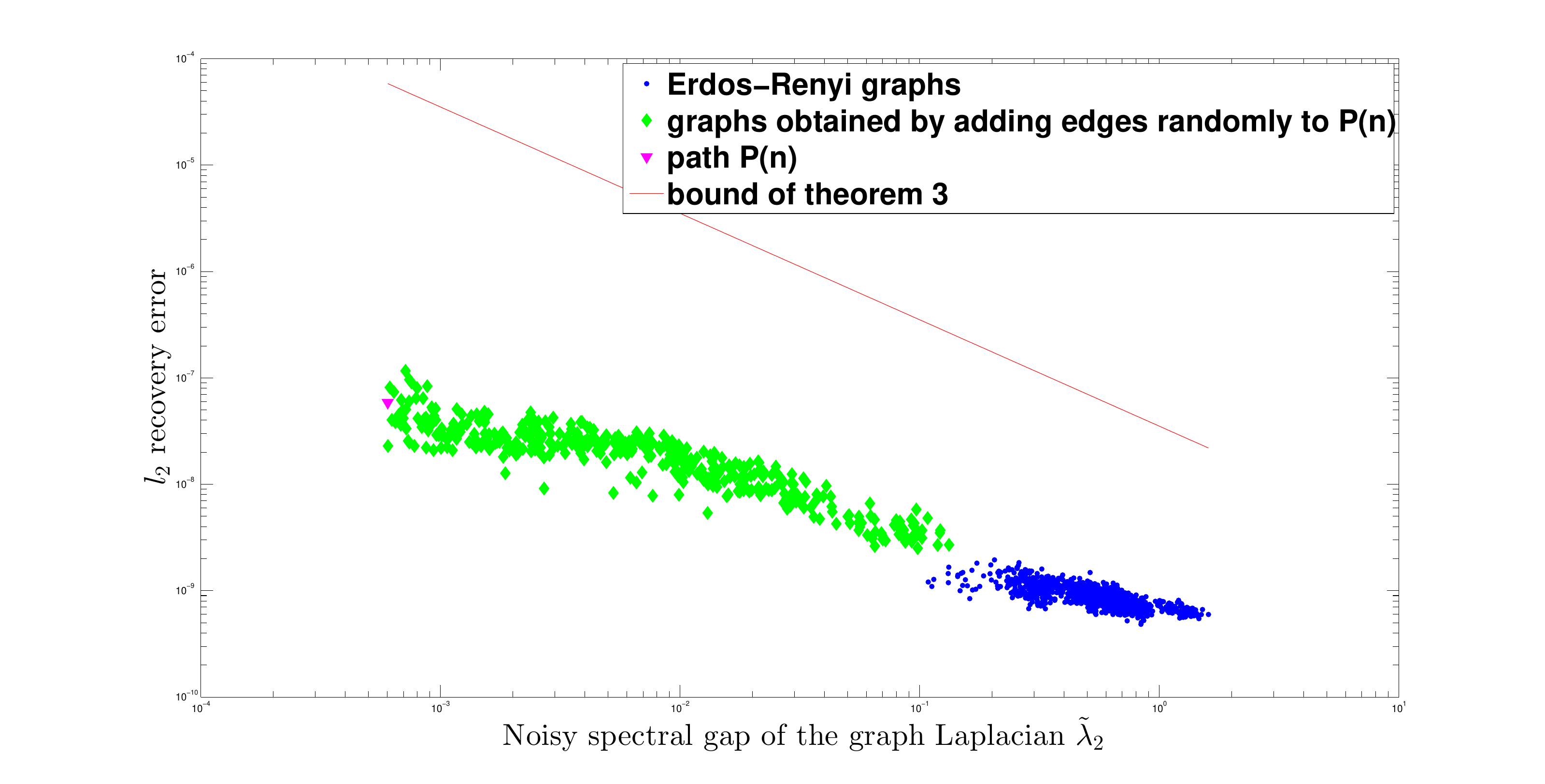}
\caption{Recovery error for the eigenvector method as a function of $\tilde{\lambda}_2$}\label{evGap}
\end{figure}

To study the feasibility method, we consider the case of an approximate fit ($\sigma=10^{-4}$) in the noiseless case ($\eps = 0$). The feasibility problem (\ref{eq:P2}) is solved using the Matlab toolbox cvx which calls the toolbox SeDuMi. An interior point algorithm (centering predictor-corrector) is used. The recovery error as a function of the spectral gap $\lambda_2$ is illustrated in figure \ref{feGap}. The square root scaling of the theorem seems to be a good match for the numerical experiments.

%\begin{figure}[htb]
\begin{figure}[H]
\includegraphics[width=\textwidth]{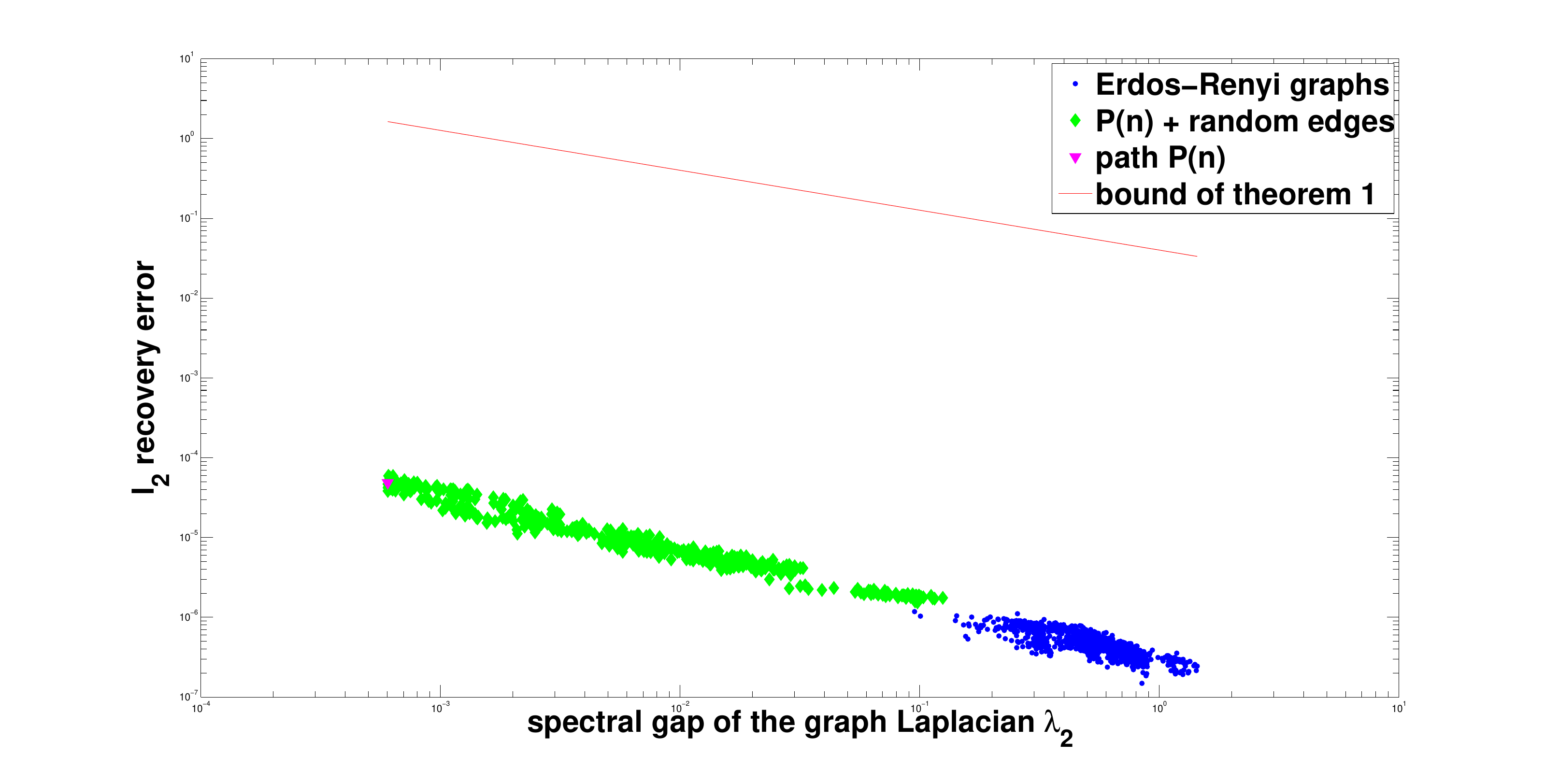}
\caption{Recovery error for the feasibility method as a function of $\lambda_2$}\label{feGap}
\end{figure}

%\subsection{Influence of the noise level}
%We now fix the set $E$ as one realization of adding $K=15$ edges randomly to $P_n$. We then draw realizations of the noise level, $\eps\sim\C\mathcal{N}(0,\eta^2)$ with $\eta$ logarithmically equally spaced between $10^{-6}\lambda_2$ and $10^{-1}\lambda_2$. The recovery for the eigenvector method is illustrated in figure \ref{evNoise}.
%
%%\begin{figure}[htb]
%\begin{figure}[H]
%\includegraphics[width=.9\textwidth]{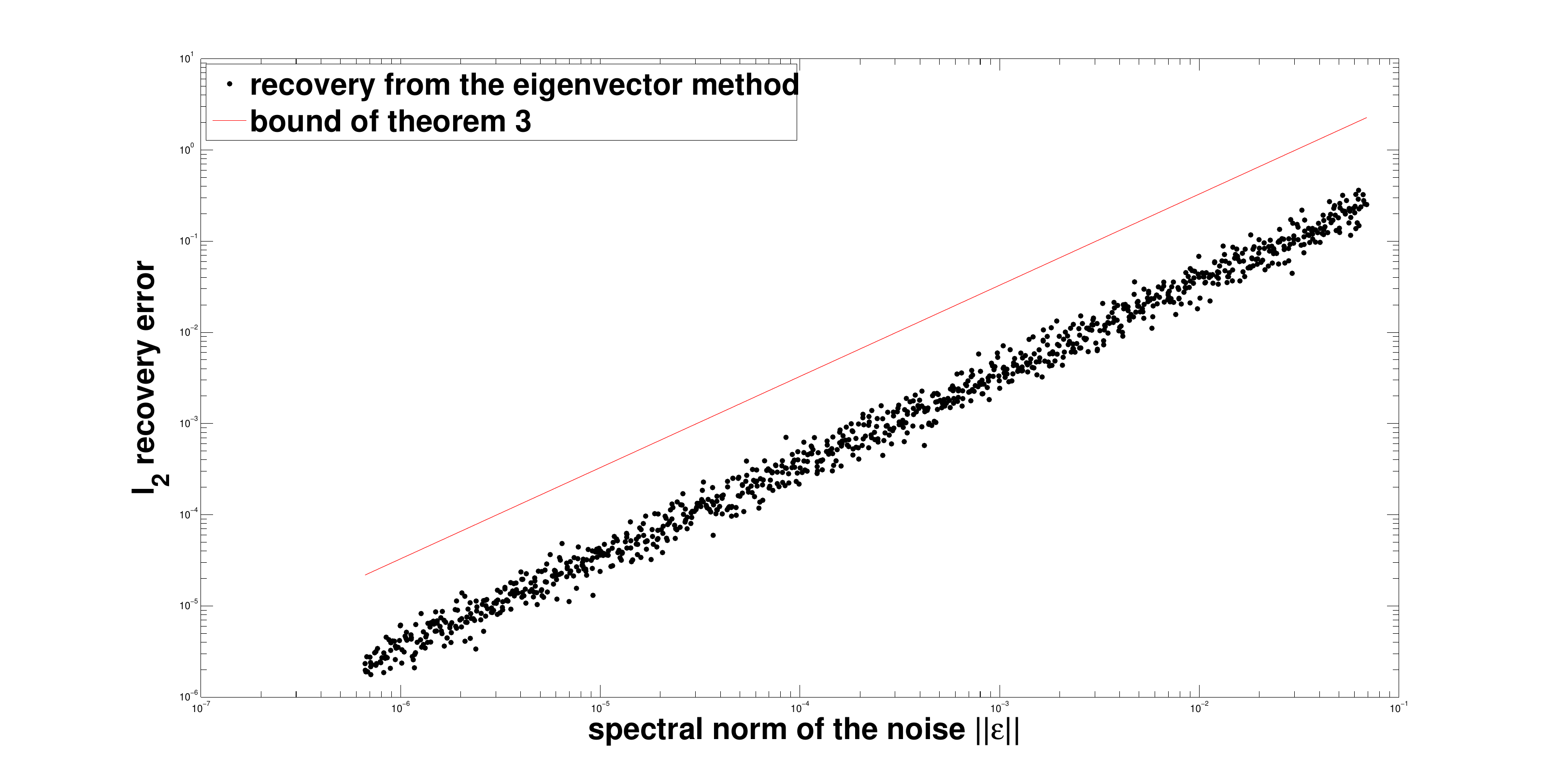}
%\caption{Recovery error for the eigenvector method as a function of the spectral norm of the noise $||\epsilon||$}\label{evNoise}
%\end{figure}
%
%For the feasibility problem, we chose $\sigma$ to be two times the $\ell_1$ norm of $\epsilon$ on $E$. The recovery for the feasibility method is illustrated in figure \ref{feNoise}. As mentioned earlier, it is unclear to us whether the bound could be strenghthened or if the discrepancy owes to the particular method by which a point is chosen in the feasibility set.
%
%
%%\begin{figure}[htb]
%\begin{figure}[H]
%\includegraphics[width=.9\textwidth]{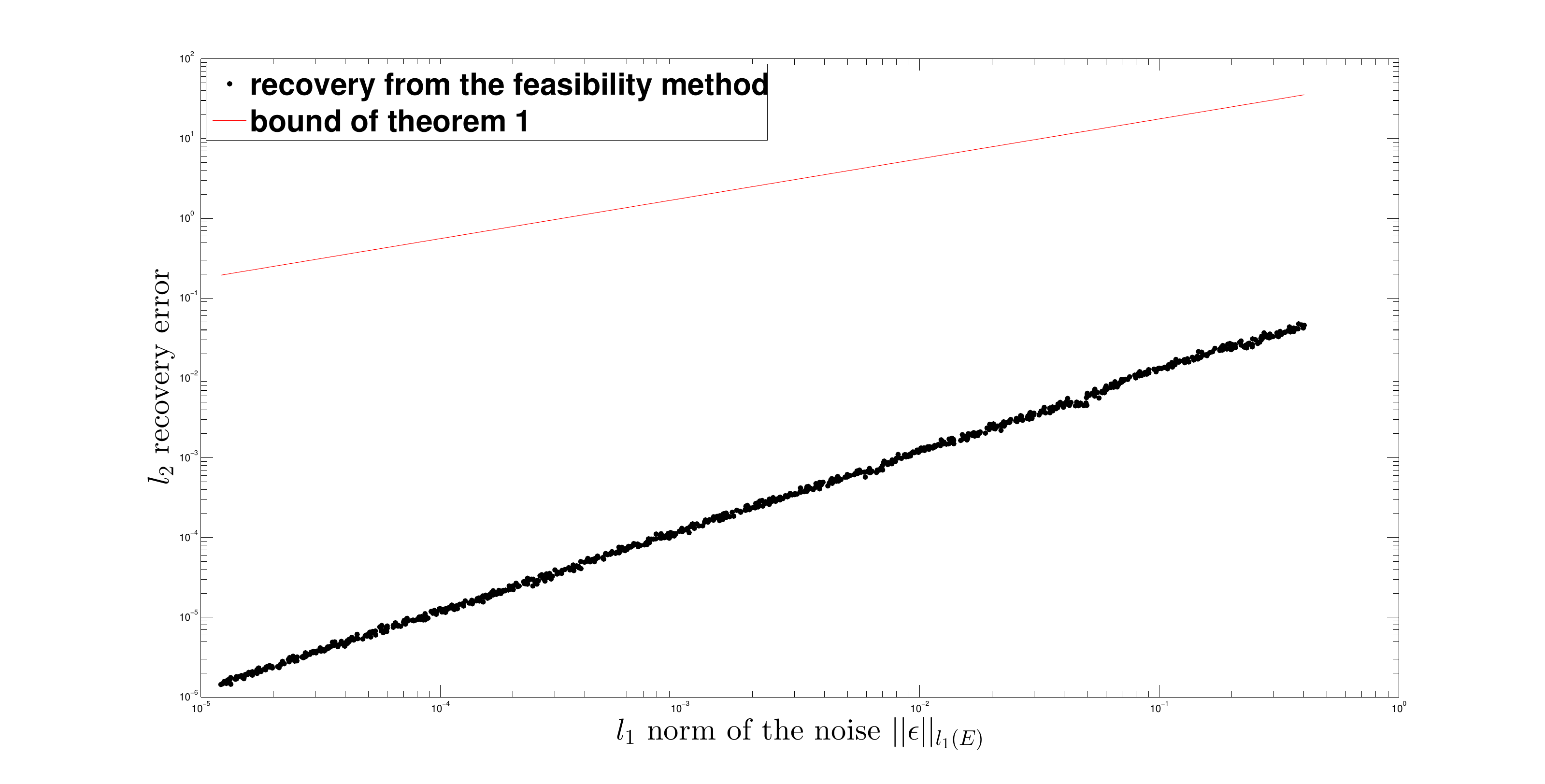}
%\caption{Recovery error for the feasiibility method as a function of the $l_1$ norm of the noise $||\epsilon||_1$.}\label{feNoise}
%\end{figure}

\section{Numerical results: interferometric inverse scattering}\label{sec:num}

\subsection{Setup and context}

An important application of interferometric inversion is seismic imaging, where the (simplest) question is to recover of a medium's wave velocity map from recordings of waves scattered by that medium. Linear inverse problems often arise in this context. For instance, inverse source problems arise when locating microseismic events, and linear inverse scattering in the Born regime yield model updates for subsurface imaging. These linear problems all take the form
\begin{equation}
Fm=d,
\label{linear}
\end{equation}
where $F$ is the forward or modeling operator, describing the wave propagation and the acquisition, $m$ is the reflectivity model, and $d$ are the observed data. (We make this choice of notation from now on; it is standard and more handy than $Ax=b$ for what follows.) The classical approach is to use the data (seismograms) directly, to produce an image either 
\begin{itemize}
\item  by migrating the data (Reverse-time migration, RTM),
\begin{equation*}
I_{\text{RTM}}=\tilde{F}^*d
\end{equation*}
where $\tilde{F}$ is the \textbf{simulation} forward operator and $^*$ stands for the adjoint ;
\item or by finding a model that best fits the data, in a least-squares sense (Least-squares migration, LSRTM),
\begin{equation}
\label{LSM}
 m_{\text{LSM}}=\arg\min_m\frac{1}{2}||\tilde{F}m-d||_2^2.
\end{equation}
\end{itemize}
It is important to note that the physical forward operator $F$ and the one used in simulation $\tilde{F}$ can be different due to modeling errors or uncertainty. Such errors can happen at different levels:
\begin{enumerate}
\item background velocity,
\item sources and receivers positions,
\item sources time profiles.
\end{enumerate}
This list is non-exhaustive and these modeling errors have a very different effect from additive Gaussian white noise, in the sense that they induce a coherent perturbation in the data. As a result, the classical approaches (RTM, LSM) may fail in the presence of such uncertainties. 

The idea of using interferometry (i.e. products of pairs of data) to make migration robust to modeling uncertainties has already been proposed in the literature \cite{Borcea}, \cite{sava2008interferometric}, \cite{SchusterYu}, producing remarkable results. 
In their 2005 paper \cite{Borcea}, Borcea et al. developed a comprehensive framework for interferometric migration, in which they proposed the Coherent INTerfermetic imaging functional (CINT), which can be recast in our notation as
\begin{equation*}
I_{\text{CINT}}=\text{diag}\{\tilde{F}^*(E\circ [dd^*])\tilde{F}\},
\end{equation*}
where $dd^*$ is the matrix of all data pairs products, $E$ is a selector, that is, a sparse matrix with ones for the considered pairs and zeros elsewhere, $\circ$ is the entrywise (Hadamard) product, and diag is the operation of extracting the diagonal of a matrix. The CINT functional involves $\tilde{F}^*$, $\tilde{F}$ and $F$, $F^*$ (implicitly through $dd^*$), so cancellation of model errors can still occur, even when $F$ and $\tilde{F}$ are different. Through a careful analysis of wave propagation in the particular case where the uncertainty consists of random fluctuations of the background velocity, Borcea et al. derive conditions on $E$ under which CINT will be robust. 

The previous sections of this paper do not inform the robustness mentioned above, but they explain \emph{how} the power of interferometry can be extended to inversion. In a noiseless setting, the proposal is to perform inversion using selected data pair products,
\begin{equation}
\text{find }m\text{ s.t. } E\circ(\tilde{F} mm^* \tilde{F}^*)=E\circ [dd^*],
\label{intinv}
\end{equation}
i.e., we look for a model $m$ that explains the data pair products $d_i \bar{d_j}$ selected by $(i,j)\in E$. (The notation $E$ has not changed from the previous sections.) Here, $i$ and $j$ are meta-indices in data space. For example, in an inverse source problem in the frequency domain, $i \equiv (r_i,\omega_i)$ and $j \equiv (r_j,\omega_j)$, and for inverse Born scattering, $i \equiv (r_i,s_i,\omega_i)$ and $j \equiv (r_j,s_j,\omega_j)$.

A straightforward and noise-aware version of this idea is to fit products in a least-squares sense,
\begin{equation}
\label{LS}
\hat{m}_{\text{LS,pairs}}=\arg\min_m||E\circ(\tilde{F}mm^*\tilde{F}^*-dd^*)||_F^2.
\end{equation}
While the problem in (\ref{intinv}) is quadratic, the least-squares cost in (\ref{LS}) is quartic and nonconvex. The introduction of local minima is a highly undesired feature, and numerical experiments show that gradient descent can indeed converge to a completely wrong local minimizer.

A particular instance of the interferometric inversion problem is inversion from cross-correlograms. In that case, $$E_{i,j}=1\ \Leftrightarrow \omega_i=\omega_j.$$ This means that $E$ considers data pairs from different sources and receivers at the same frequency, 
$$
d_i\overline{d_j}=d(r_i,s_i,\omega)\overline{d(r_j,s_j,\omega)},
$$
where the overline stands for the complex conjugation. This expression is the Fourier transform at frequency  $\omega$ of the cross-correlogram between trace $(r_i.s_i)$ and trace $(r_j,s_j)$.

The choice of selector $E$ is an important concern. As explained earlier, it should describe a connected graph for inversion to be possible. In the extreme case where $E$ is the identity matrix, the problem reduces to estimating the model from intensity-only (phaseless) measurements, which does not in general have a unique solution for the kind of $F$ we consider. The same problem plagues inversion from cross-correlograms only: $E$ does not correspond to a connected graph in that case either, and numerical inversion typically fails. On the other hand, it is undesirable to consider too many pairs, both from a computational point of view and for robustness to model errors. For instance, in the limit when $E$ is the complete graph of all pairs $(i,j)$, it is easy to see that the quartic cost function reduces to the square of the least-squares cost function. Hence, there is a trade-off between robustness to uncertainties and quantity of information available to ensure invertibility.  It is important to stress that theorem \ref{teo:main2} gives sufficient conditions on E for recovery to be possible and stable to additive noise $\eps$, but not to modeling error (in the theorem, $F=\tilde{F}$). It does not provide an explanation of the robust behavior of interferometric inversion; see however the numerical experiments.

In the previous section, we showed that lifting convexifies problems such as (\ref{intinv}) and (\ref{LS}) in a useful way. In the context of wave-based imaging, this idea was first proposed in \cite{Chai} for intensity-only measurements. In this section's notations, we replace the optimization variable $m$ by the symmetric matrix $M=mm^*$, for which the data match becomes a linear constraint. Incorporating the knowledge we have on the solution, the problem becomes
\begin{equation*}
\begin{array}{cc}
\text{find }\quad M\quad\text{ s. t. }\\
E\circ[\tilde{F}M\tilde{F}^*]=E\circ[dd^*],\\
M\succeq 0,\\
\text{rank}(M)=1.
\end{array}
\end{equation*}
The first two constraints (data fit and positive semi-definiteness) are convex, but the rank constraint is not and would in principle lead to a combinatorially hard problem. However, as the theoretical results of this paper make clear, the rank constraint can often be dropped. We also relax the data pairs fit -- an exact fit is ill-advised because of noise and modeling errors -- to obtain the following feasibility problem equivalent to (\ref{eq:P4}),
\begin{equation}
\label{feasi}
\begin{array}{cc}
\text{find }\quad M\quad\text{ s. t. },\\
||\tilde{F}M\tilde{F}^*-dd^*||_{\ell_1(E)}\leq\sigma,\\
M\succeq 0.\\
\end{array}
\end{equation}
The approximate fit is expressed in an entry-wise $\ell_1$ sense. This feasibility problem is a convex program, for which there exist simple converging iterative methods. Once $M$ is solved for, we have already seen that a model estimate can be obtained by extracting the leading eigenvector of $M$.  \\

\subsection{A practical algorithm}

The convex formulation in (\ref{feasi}) is too costly to solve at the scale of even toy problems. Let $N$ be the total number of degrees of freedom of the unknown model $m$ ; then the variable $M$ of (\ref{feasi}) is a $N\times N$ matrix, on which we want to impose positive semi-definiteness and approximate fit. To our knowledge, there is no time-efficient and memory-efficient algorithm to solve this type of semi-definite program when $N$ ranges from $10^4$ to $10^6$.

We consider instead a non-convex relaxation of the feasibility problem (\ref{feasi}), in which we limit the numerical rank of $M$ to $K$, as in \cite{burer2003}. We may then write $M=RR^*$ where $R$ is $N\times K$ and $K\ll N$. We replace the approximate $\ell_1$ fit by Frobenius minimization. Regularization is also added to handle noise and uncertainty, yielding
\begin{equation}
\label{prac}
\hat{R}=\arg\min_R||E\circ(\tilde{F}RR^*\tilde{F}^*-dd^*)||_F^2+\lambda||R||_F^2.
\end{equation}
An estimate of $m$ is obtained from $\hat{R}$ by extracting the leading eigenvector of $\hat{R}\hat{R}^*$. Note that the Frobenius regularization on $R$ is equivalent to a trace regularization on $M=RR^*$, which is known to promote the low-rank character of the matrix. The rank-$K$ relaxation (\ref{prac}) can be seen as a generalization of the straightforward least-squares formulation (\ref{LS}). The two formulations coincide in the limit case $K=1$. The strength of (\ref{prac}) is that the optimization variable is in a slightly bigger space than formulation (\ref{LS}).

The rank-$K$ relaxation (\ref{prac}) is still nonconvex, but in practice, no local minimum has been observed even for $K=2$, whereas the issue often arises for the least-squares approach (\ref{LS}). It should also be noted that the success of inversion is simple to test \emph{a posteriori}, by comparing the sizes of the largest and second largest eigenvalues of $M$.

In practice, the memory requirement of rank-$K$ relaxation is simply $K$ times than that of non-lifted least-squares. The runtime per iteration is typically lower than $K$ times that of a least-squares step however, because the bottleneck in the computation of $\tilde{F}$ and $\tilde{F}^*$ is usually the LU factorization of a Helmholtz operator, which needs to be done only once per iteration. Empirically, the number of iterations needed for convergence does not vary much as a function of $K$. 

\subsection{Examples}

Our examples fall into three categories.
\begin{enumerate}
\item {\bf Linear inverse source problem.} Here we consider a set of receiver locations $x_r$, and a constant density acoustics inverse source problem, which reads in the Fourier domain
$$
-(\Delta+\omega^2m_0(x))\hat{u}_{s}(x,\omega)=\hat{w}(\omega)m(x)
$$
$$
Fm = d(x_r,\omega)=\hat{u}_{s}(x_r,\omega)
$$
$$
m_0(x)=\frac{1}{c_0(x)^ 2} \qquad \mbox{(squared slowness)}
$$
Waves are propagating from a source term with known time signature $w$. The problem is to reconstruct the spatial distribution $m$. \\

\item {\bf Linearized inverse scattering problem.} Here we consider a set of receivers $x_r$, waves generated by sources $x_s$, and a constant density acoustic inverse problem for the reflectivity perturbation $m_1$,
$$
\begin{array}{l}
-(\Delta+\omega^2m_0(x))\hat{u}_{0,s}(x,\omega)=\hat{w}(\omega)\delta(x-x_s)\\
-(\Delta+\omega^2m_0(x))\hat{u}_{1,s}(x,\omega)=\omega^2\hat{u}_{0,s}(x,\omega)m_1(x)
\end{array}
$$
$$
Fm_1=d(x_r,x_s,\omega)=\hat{u}_{1,s}(x_r,\omega).
$$
The isolation of the Born scattered wavefield (primary reflections) $u_{1,s}$ from the full scattered field, although a very difficult task in practice, is assumed to be performed perfectly in this paper. 

\item {\bf Full waveform inversion (FWI).} We again consider a set of receivers $x_r$, waves generated by sources $x_s$, and a constant density acoustic inverse problem for the reflectivity $m$, with 
$$
-(\Delta+\omega^2 m(x))\hat{u}_s(x,\omega)=\hat{w}(\omega)\delta(x-x_s)\\
$$
$$
\mathcal{F}(m)=d(x_r,x_s,\omega)=\hat{u}_s(x_r,\omega).
$$

\end{enumerate}

In a first numerical experiment (Figure \ref{fig:seismic1}), we consider a linearized inverse scattering problem where $c = 1/\sqrt{m}$ is the Marmousi2 p-wave velocity model \cite{Marmousi2}, $m_0$ is a smoothed version of $m$, and $m_1 = m - m_0$ is the model perturbation used to generated data in the linearized forward model. The sources and receivers are placed at the top of the image in an equispaced fashion, with 30 sources and 300 receivers. The frequency sampling is uniform between 3 and 10 Hz, with 10 frequency samples. The Helmholtz equation is discretized with fourth-order finite differences at about 20 points per wavelength for data modeling, while the simulations for the inversion are with second-order finite differences. The noise model is taken to be Gaussian,
\[
\tilde{d}_i=d_i+\eta_i\quad \eta_i\sim\mathbb{C}\mathcal{N}(0,\sigma^2)
\]
where $\displaystyle\sigma=0.1\frac{||b||_2}{\sqrt{2n}}$, so that $\displaystyle\frac{||\eta||_2}{||b||_2}=0.1$ (10\% additive noise). Figure \ref{fig:seismic1} shows stable recovery of $m_1$ from noisy $d$, both by least-squares and by interferometric inversion. In this case the graph $E$ is taken to be an Erd\H{o}s-R\'{e}nyi random graph with $\displaystyle p=1.5\frac{log(N)}{N}$ to ensure connectedness. (Instabilities occur if $p$ is smaller; larger values of $p$ do not substantially help.) Note that if $E$ were chosen as a disconnected graph that only forms cross-correlations (same $\omega$ for the data indices $i$ and $j$), then interferometric inversion is not well-posed and does not result in good images (not shown). The optimization method for interferometric inversion is the rank-2 relaxation scheme mentioned earlier. The message of this numerical example is twofold: it shows that stable recovery is possible in the interferometric regime, when $E$ is properly chosen; and a comparison of Figures \ref{fig:seismic1} bottom and middle shows that it does not come with the loss of resolution that would be expected from CINT \cite{Borcea}. This observation was also a key conclusion in the recent paper by Mason et al. \cite{Yazici}.

In a second numerical experiment (Figure \ref{fig:seismic2}), we consider full waveform inversion where $c = 1/\sqrt{m}$ is the Marmousi2 p-wave velocity model, and the initial model is the same $m_0$ as used in the previous example. The setup for the acquisition, the Helmholtz equation, the noise, and the mask $E$ are the same as before. Figure \ref{fig:seismic2} shows the result of quasi-Newton (LBFGS) iterations with a frequency sweep, and the corresponding interferometric inversion result. To produce this latter result, we simply replace the classical adjoint-state gradient step by interferometric inversion applied to the data residual. In all the numerical experiments so far (Figures \ref{fig:seismic1} and \ref{fig:seismic2}), the values of the model misfits are not meaningfully different in the least-squares and interferometric cases. The message of this numerical example is that there seems to be no loss of resolution in the full waveform case either, when comparing Figure \ref{fig:seismic2} (bottom) to its least-squares counterpart (second from bottom).

Possible limitations of the interferometric approach are the slightly higher computational and memory cost (as discussed in the previous section); the need to properly choose the data mask $E$; and the fact that the data match is now quartic rather than quadratic in the original data $d$. Quartic objective functions can be problematic in the presence of outliers, such as when the noise is heavy-tailed, because they increase the relative importance of those corrupted measurements. We did not attempt to deal with heavy-tailed noise in this paper.

It is also worth noting that ``backprojecting the cross-correlations" (or the more general quadratic combinations we consider here) can be shown to be related to the first iteration of gradient descent in the \emph{lifted semidefinite formulation} of interferometric inversion.

So far, our numerical experiments merely confirm the theoretical prediction that interferometric inversion can be accurate and stable under minimal assumptions on the mask/graph $E$ of data pair products. In the next section, we show that there is also an important \emph{rationale} for switching to the interferometric formulation: its results display robustness vis-a-vis some uncertainties in the forward model $F$.

%\begin{figure}[htb]
\begin{figure}[htb]
\includegraphics[width=\textwidth, height=4cm]{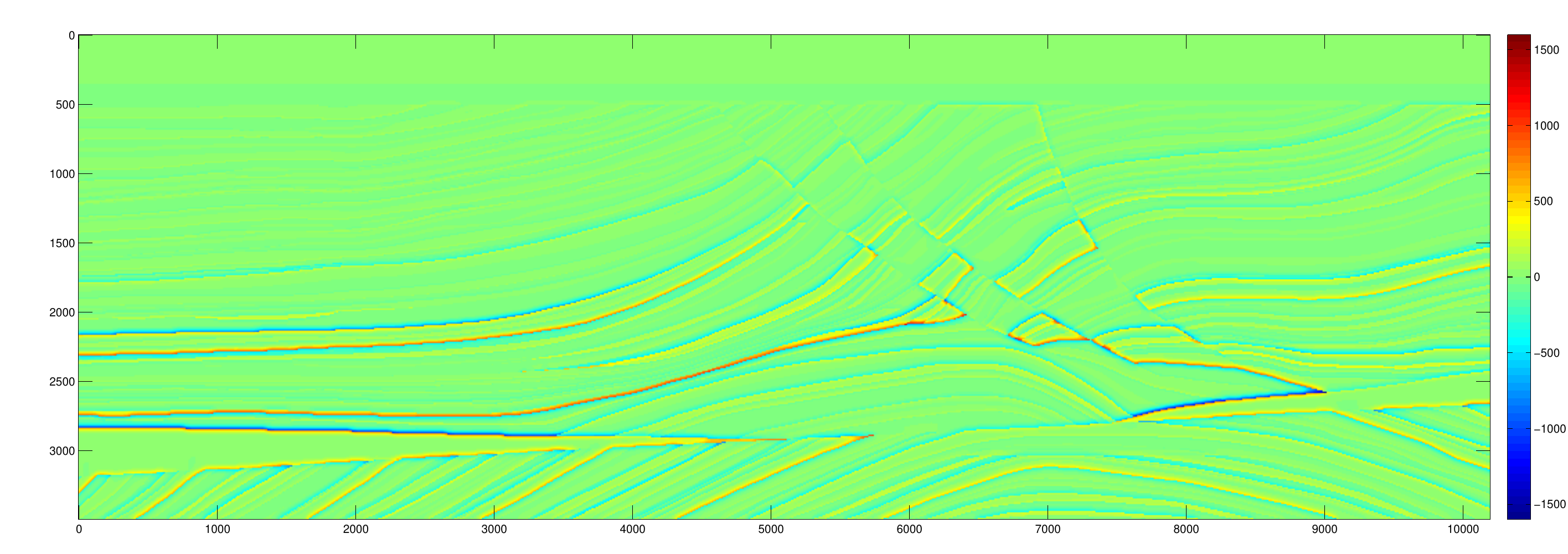} \\
\includegraphics[width=\textwidth, height=4cm]{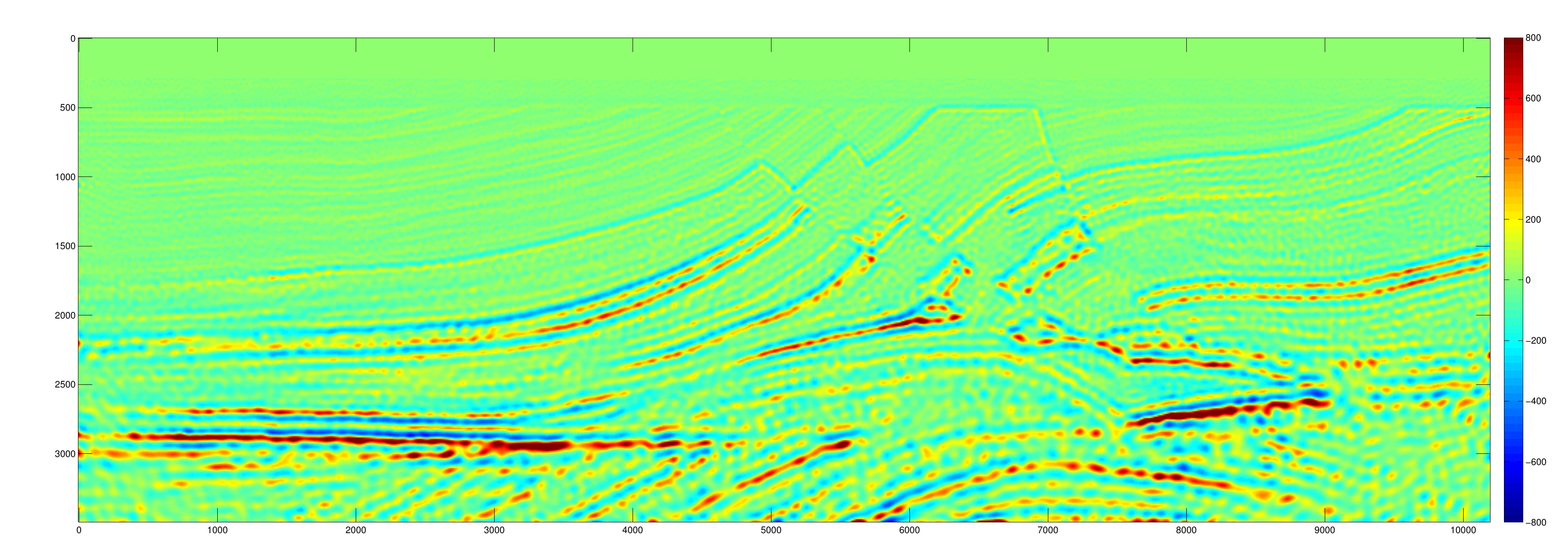} \\
\includegraphics[width=\textwidth, height=4cm]{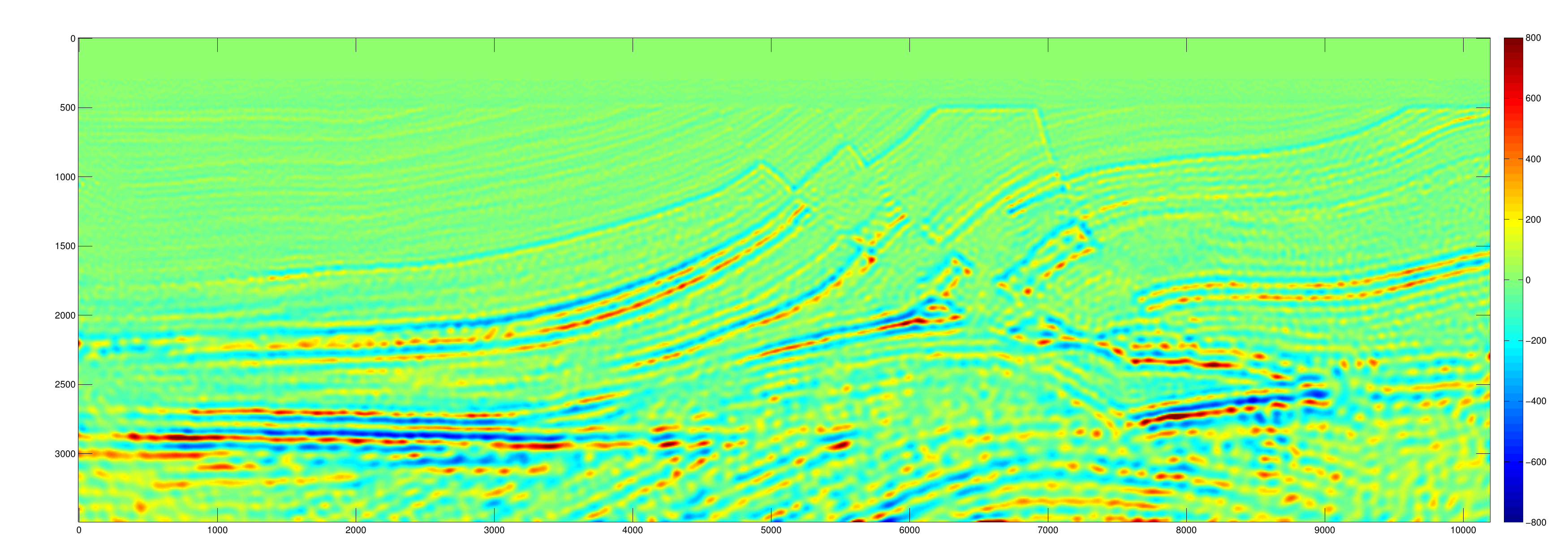}
\caption{Top: true, unknown reflectivity profile $m_1$ used to generate data. Middle: least-squares solution. Bottom: result of interferometric inversion. This example shows that interferometric inversion is stable when the mask $E$ is connected, as in the theory, and shows no apparent loss of resolution vs. least squares.}\label{fig:seismic1}
\end{figure}

%\begin{figure}[htb]
\begin{figure}[htb]
\includegraphics[width=\textwidth, height=4cm]{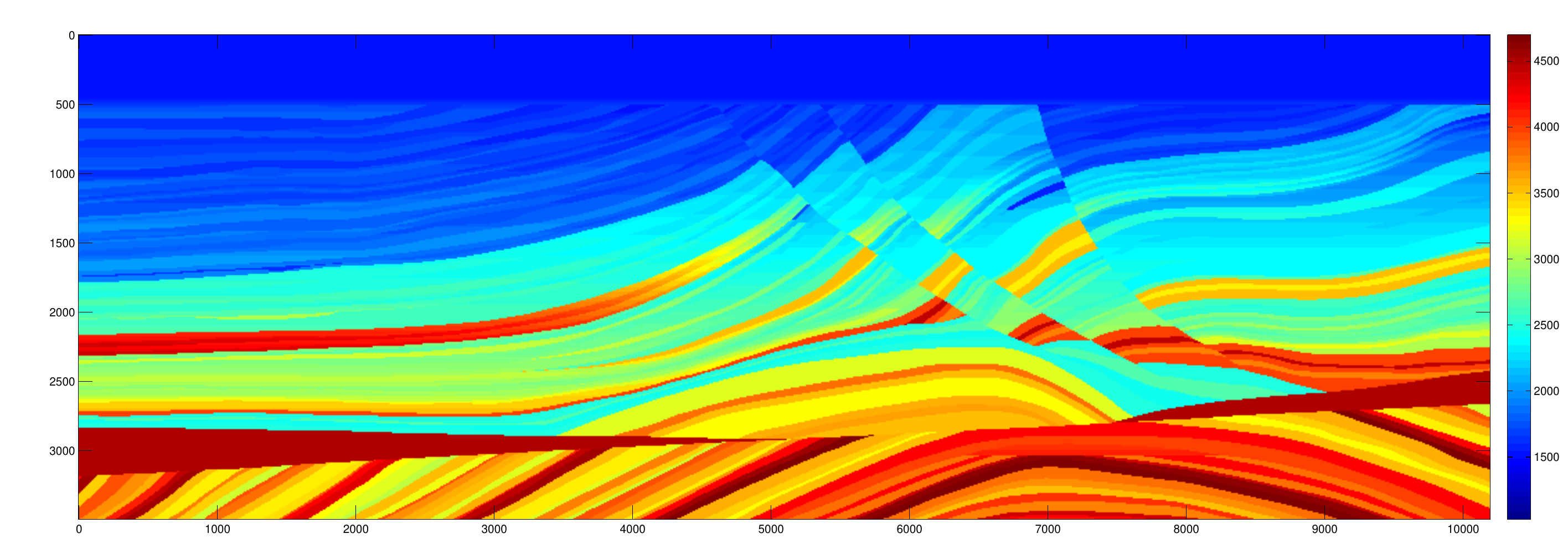} \\
\includegraphics[width=\textwidth, height=4cm]{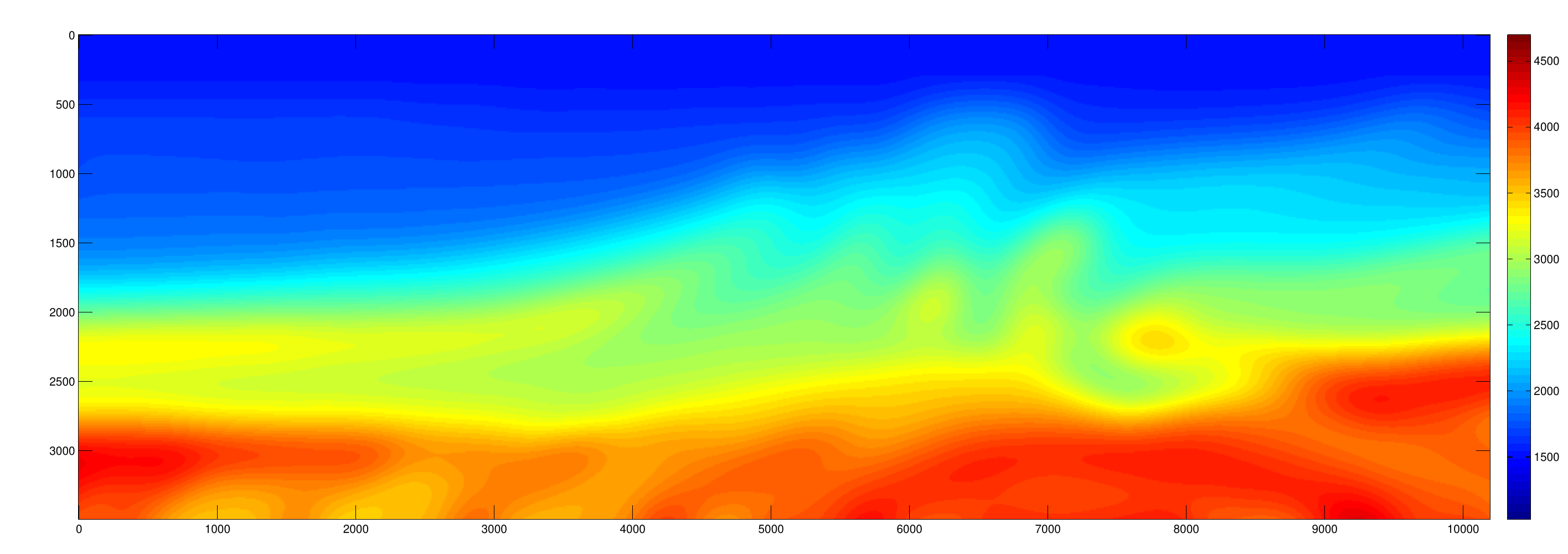} \\
\includegraphics[width=\textwidth, height=4cm]{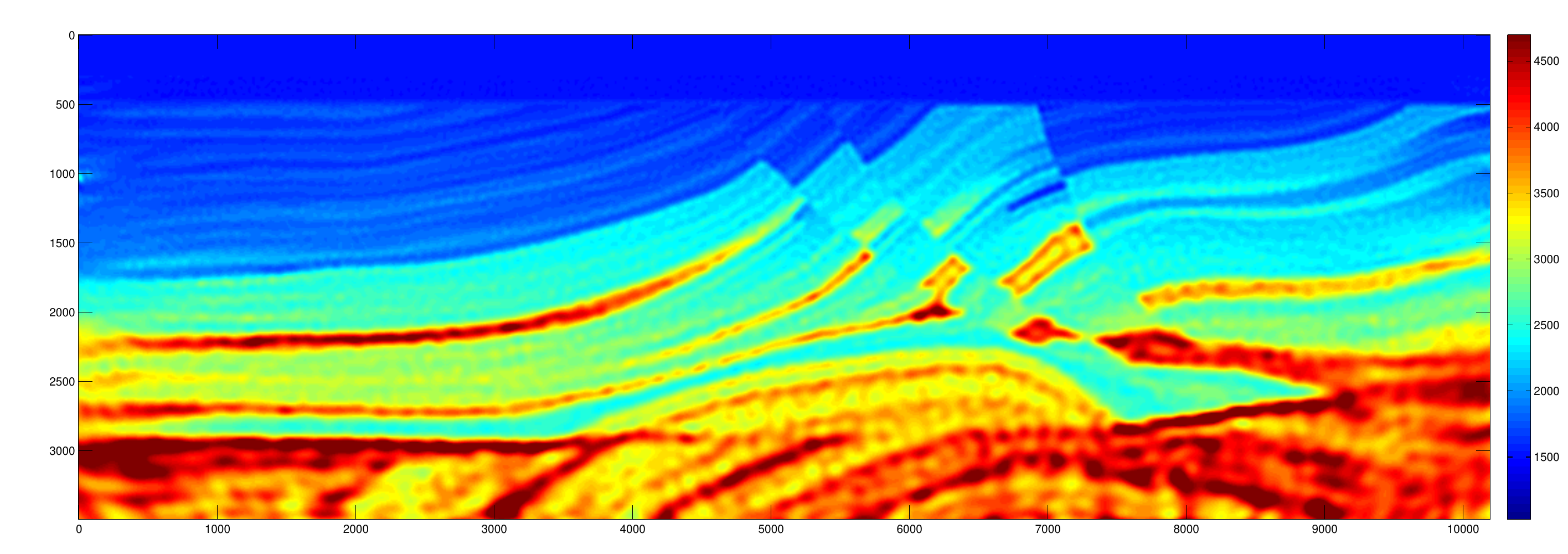} \\
\includegraphics[width=\textwidth, height=4cm]{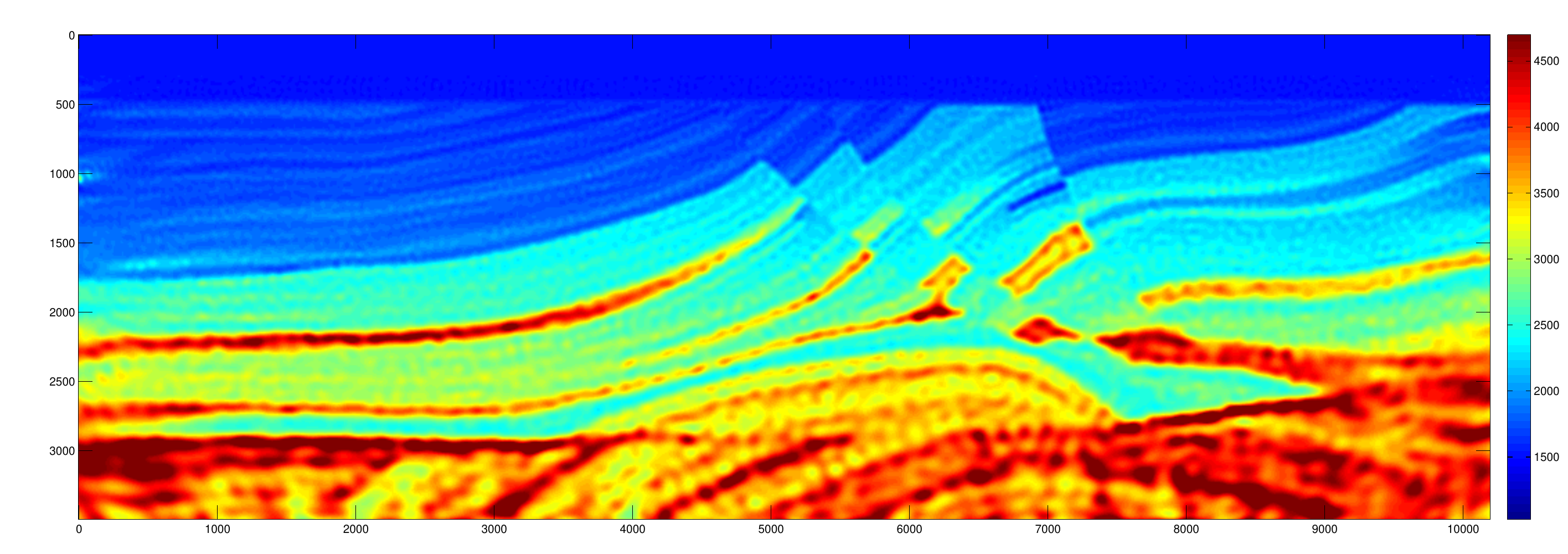}
\caption{Top: true, unknown map of the squared slowness $m$ used to generate data. Second: initial guess for either inversion scheme. Third: nonlinear least-squares solution, a.k.a. full waveform inversion. Bottom: result of interferometric inversion. This example shows that the stability and resolution properties of interferometric inversion carry over to the case of full waveform inversion.}\label{fig:seismic2}
\end{figure}

\subsection{Model robustness}

In this section we continue to consider wave-based imaging scenarios, but more constrained in the sense that the receivers and/or sources completely surround the object to be imaged, i.e., provide a full aperture. The robustness claims below are only for this case. On the other hand, we now relax the requirement that the forward model $\tilde{F}$ used for inversion be identical or very close to the forward map $F$ used for data modeling. The interferometric mask $E$ is also different, and more physically meaningful than in the previous section: it indicates a small band around the diagonal $i=j$, i.e., it selects nearby receiver positions and frequencies, as in \cite{Borcea}. The parameters of this selector were tuned for best results; they physically correspond to the idea that two data points $d_i$ and $d_j$ should only be compared if they are within one cycle of one another for the most oscillatory phase in $d$. In all our experiments with model errors, it is crucial that the data misfit parameter $\sigma$ be positive nonzero -- it would be a mistake to try and explain the data perfectly with a wrong model.

In the next numerical experiment (Figure \ref{fig:robust_isp}), we consider the inverse source problem, where the source distribution $m$ is the Shepp-Logan phantom. This community model is of interest in ultrasound medical imaging, where it represents a horizontal cross-section of different organs in the torso. The receivers densely surround the phantom and are depicted as white crosses. Equispaced frequencies are considered on the bandwidth of $w$. A small modeling error is assumed to have been made on the background velocity: in the experiment, the waves propagated with unit speed $c_0(x) = 1$, but in the simulation, the waves propagate more slowly, $\tilde{c}_0(x) = 0.95$. As shown in Figure \ref{fig:robust_isp} (middle),  least-squares inversion does not properly handle this type of uncertainty and produces a defocused image. (Least-squares inversion is regularized with the $\ell_2$ norm of the model, a.k.a. Tykhonov. A large range of values of the regularization parameter was tested, and the best empirical results are reported here.) In contrast, interferometric inversion, shown in Figure \ref{fig:robust_isp} (bottom), enjoys a better resolution.  The price to pay for focusing is positioning: although we do not have a mathematical proof of this fact, the interferometric reconstruction is near a shrunk version of the true source distribution. 

In the last numerical experiment (Figure \ref{fig:robust_born}), we consider the linearized inverse scattering problem, where the phantom is now the reflectivity perturbation $m_1$. As the figure shows, sources and receivers surround the phantom, with a denser sampling for the receivers than for the sources. The wave speed profile is uniform ($c=1$). In this example, the modeling error is assumed to be on the receiver positions: they have a smooth random deviation from a circle, as shown in Figure \ref{fig:robust_born} (top). Again, least-squares inversion produces a poor result regardless of the Tykhonov regularization parameter (Figure \ref{fig:robust_born}, middle), where the features of the phantom are not clearly reconstructed, and the strong outer layer is affected by a significant oscillatory error. Interferometric inversion produces a somewhat more focused reconstruction (Figure \ref{fig:robust_born}, bottom), where more features of the phantom are recovered and the outer layer is well-resolved.

Model robustness is heuristically plausible in the scenario when data are of the form $d_i \sim e^{i \omega \tau_i}$ for some traveltimes $\tau_i$ which are themselves function of a velocity $c$ through $\tau_i = \delta_i/c$. In that case, the combination $d_i \overline{d_j} \sim e^{i \omega (\tau_i - \tau_j)}$ has a phase that encodes the idea of a \emph{traveltime difference}. When $i$ is near $j$ in data space (because they correspond, say, to nearby receivers), it is clear that $\tau_i - \tau_j$ depends in a milder fashion on errors in $c$, or on correlated errors in $\delta_i$, than the individual traveltimes themselves. This results in some degree of stability of selected $d_i \overline{d_j}$ with respect to those types of errors, which in turn results in more focused imaging.  This phenomenon is not unlike the celebrated statistical stability of $d_i \overline{d_j}$ under some models of randomness on either the velocity or the sources. For random media, this behavior was leveraged in the context of coherent interferometric imaging (CINT) in the 2011 work of Borcea et al. \cite{CINT2011}

\begin{figure}[htb]
\begin{center}
\vspace{-.5cm}
 \includegraphics[width=.49\textwidth]{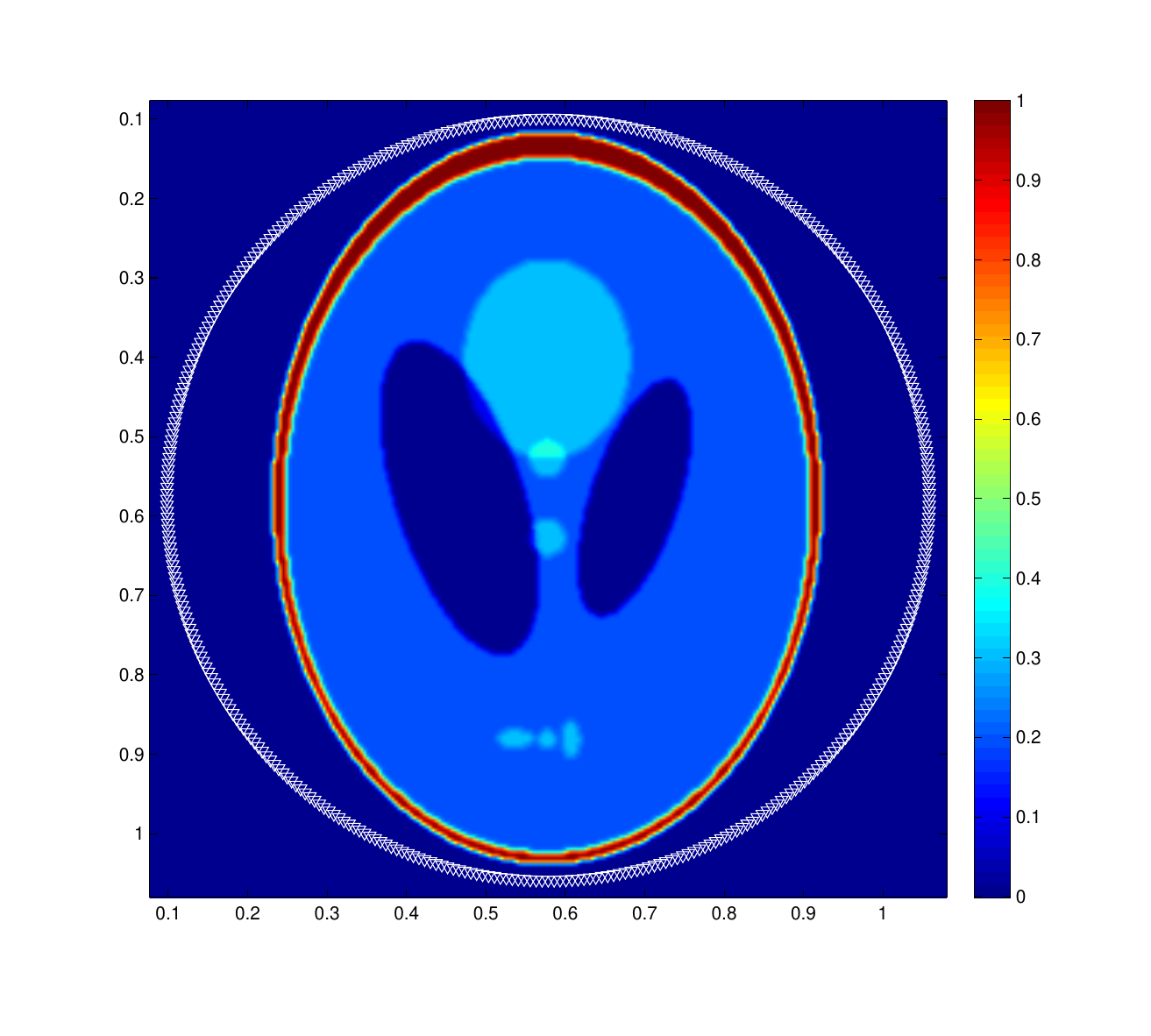} \\
 \vspace{-1cm}
 \includegraphics[width=.49\textwidth]{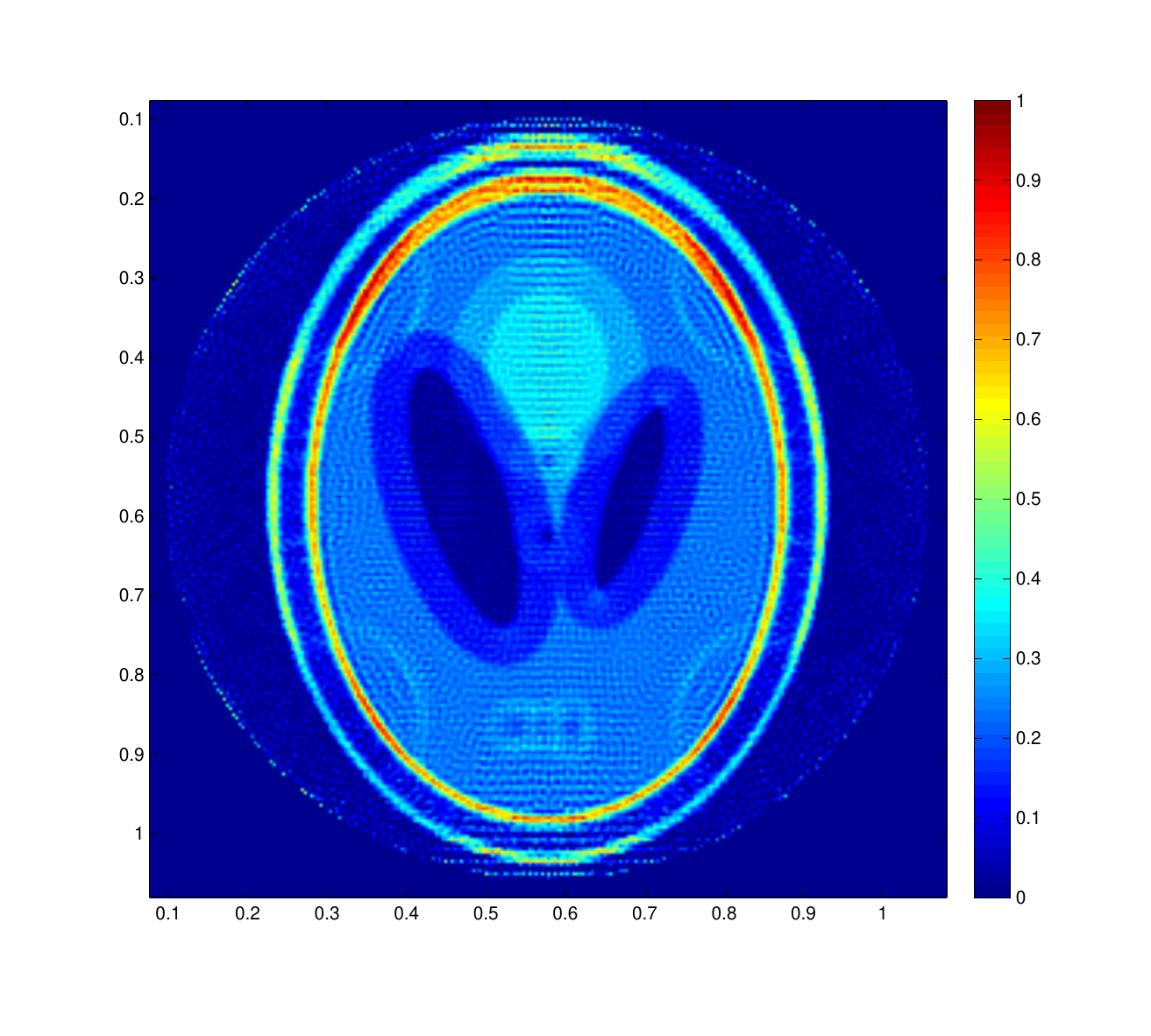} \\
 \vspace{-1cm}
 \includegraphics[width=.49\textwidth]{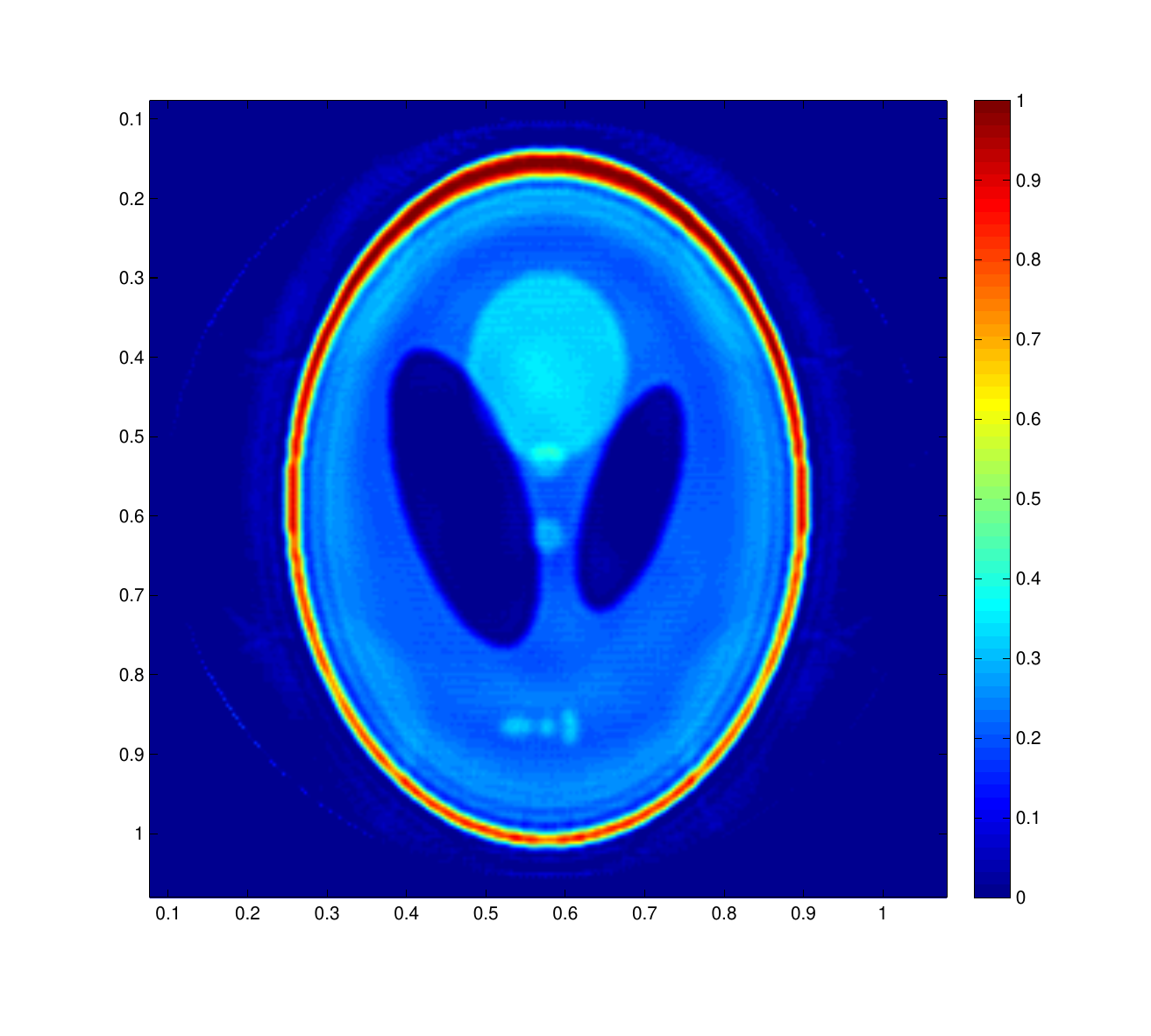} \\
 \vspace{-1cm}
  \caption{Top: Setting of the inverse source experiment.  Middle: least-squares reconstruction. Bottom: interferometric reconstruction. This example shows model robustness for focused imaging, in the specific case of full aperture, when the underlying constant wave speed is slightly wrong in the forward model.}
   \label{fig:robust_isp}
 \end{center}
\end{figure} 

\begin{figure}[htb]
\begin{center}
\vspace{-.5cm}
 \includegraphics[width=.49\textwidth]{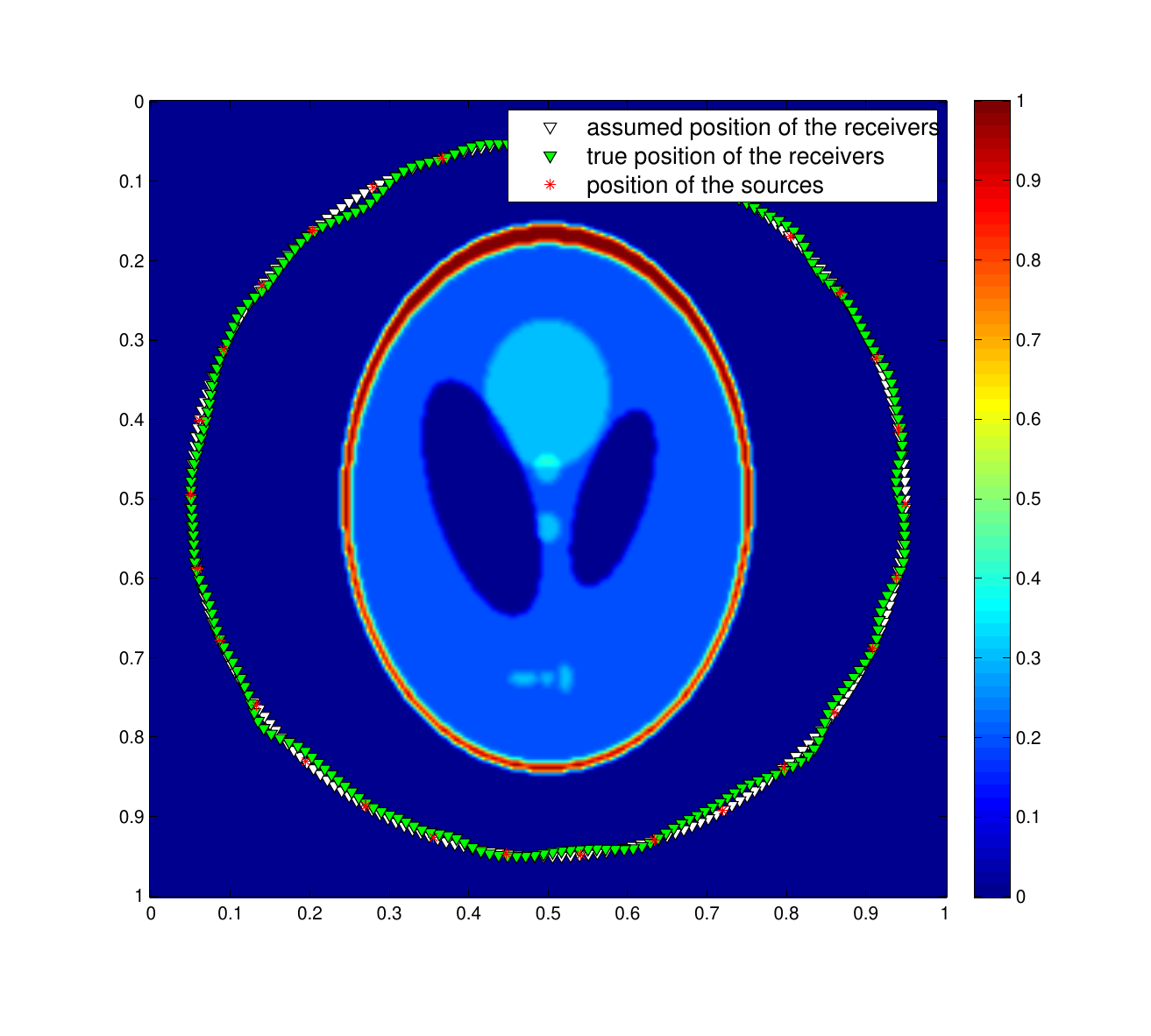} \\
 \vspace{-1cm}
 \includegraphics[width=.49\textwidth]{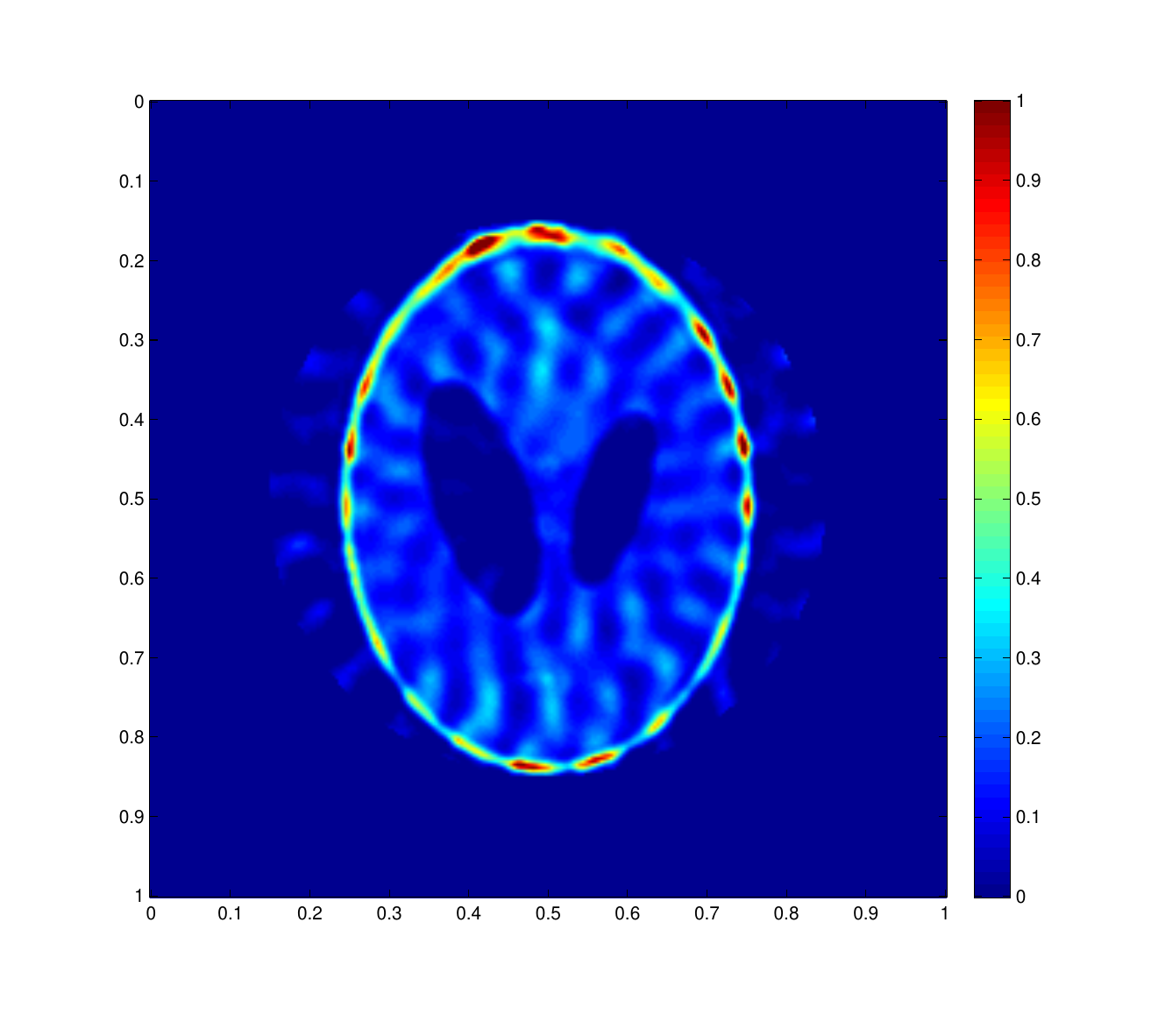} \\
 \vspace{-1cm}
 \includegraphics[width=.49\textwidth]{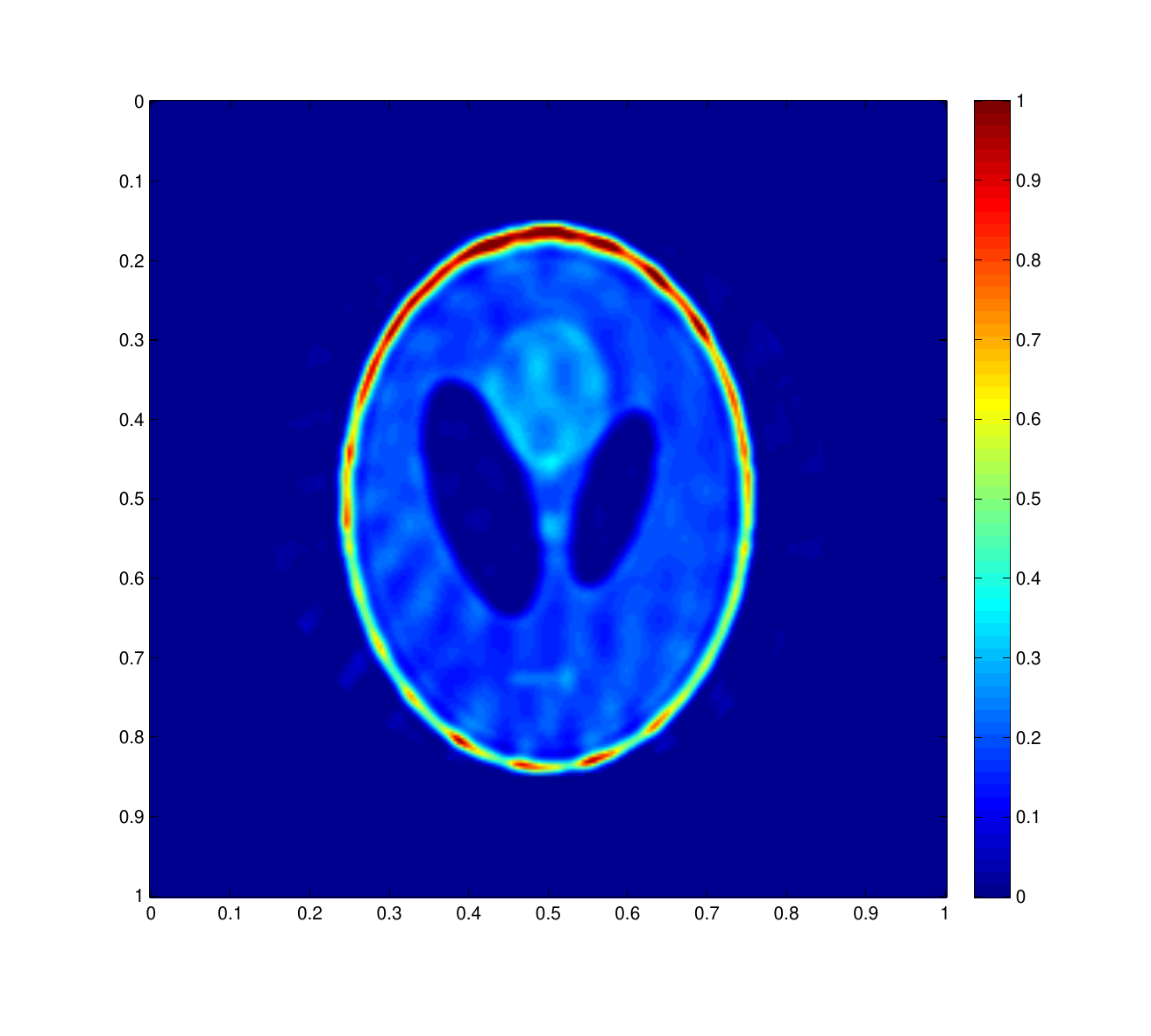} \\
 \vspace{-1cm}
  \caption{Top: Setting of the inverse scattering experiment.  Middle: least-squares reconstruction. Bottom: interferometric reconstruction. This example shows model robustness for focused imaging, in the specific case of full aperture, when the receiver locations are slightly wrong in the forward model.}
   \label{fig:robust_born}
 \end{center}
\end{figure} 

\section{Discussion}

This paper explores a recent optimization idea, convex relaxation by lifting, for dealing with the quadratic data combinations that occur in the context of interferometry. The role of the mask $E$ that selects the quadratic measurements is explained: it is shown that it should encode a well-connected graph for robustness of the recovery. The recovery theorems do not assume a random model on this graph; instead, they involve the Laplacian's spectral gap. 

Numerical experiments in the context of imaging are shown to confirm that recovery is stable when the assumptions of the theorem are obeyed. Developing methods to solve the lifted problem at interesting scales is a difficult problem that we circumvent via an ad-hoc rank-2 relaxation scheme. It is observed empirically that there is no noticeable loss of resolution from switching to an interferometric optimization formulation. It is also observed that interferometric inversion for imaging can display a puzzling degree of robustness to very specific model uncertainties. 

We leave numerous important questions unanswered, such as how to optimize the choice of the selector $E$ when there is a trade-off between connectivity and robustness to model uncertainties; how to select the data misfit parameter $\sigma > 0$ and whether there is a phase transition in $\sigma$ for model robustness vs. lack thereof; how to justify the gain in focusing in some reconstructions; and whether this gain in focusing is part of a tradeoff with the geometric faithfulness of the recovered image.

\appendix
\section{Proofs}

\subsection{Proof of theorem \ref{teo:main1}.}\label{sec:proof1}

Observe that if $x$ is feasible for (\ref{eq:P1}), then $xx^*$ is feasible for (\ref{eq:P2}), and has $e^{i \alpha} x$ as leading eigenvector. Hence we focus without loss of generality on (\ref{eq:P2}).

As in \cite{Singer}, consider the Laplacian matrix weighted with the unknown phases,
\[
\mathcal{L} = \Lambda L \Lambda^*,
\]
with $\Lambda = \mbox{diag}(x_0)$. In other words $\mathcal{L}_{ij} = (X_0)_{ij} L_{ij}$ with $X_0 = x_0 x_0^*$. We still have $\mathcal{L} \succeq 0$ and $\lambda_1 = 0$, but now $v_1 = \frac{1}{\sqrt{n}} x_0$. Here and below, $\lambda$ and $v$ refer to $\mathcal{L}$, and $v$ has unit $\ell_2$ norm.

The idea of the proof is to compare $X$ with the rank-1 spectral projectors $v_j v_j^*$ of $\mathcal{L}$.  Let $\< A, B \> = \tr(A B^*)$ be the Frobenius inner product. Any $X$ obeying (\ref{eq:P1}) can be written as $X = X_0 + \tilde{\eps}$ with $\| \tilde{\eps} \|_1 \leq \| \eps \|_1 + \sigma$. We have
\begin{align*}
\< X, \mathcal{L} \> = \< X_0, \mathcal{L} \> + \< \tilde{\eps}, \mathcal{L} \>
\end{align*}
A short computation shows that
\begin{align*}
\< X_0, \mathcal{L} \> &= \sum_i (X_0)_{ii} \conj{\mathcal{L}_{ii}} + \sum_{(i,j) \in E} (X_0)_{ij}  \conj{\mathcal{L}_{ij}} \\
&= - \sum_i d_i + \sum_{(i,j) \in E} (x_0)_i \conj{(x_0)_j} \; \conj{(x_0)_i} (x_0)_j \\
&= \sum_i \left[ - d_i + \sum_{j: (i,j) \in E} 1\right] \\
&= 0.
\end{align*}
Since $|L_{ij}| = 1$ on $E$, the error term is simply bounded as
\[
| \< \tilde{\eps}, \mathcal{L} \> | \leq \| \tilde{\eps} \|_1
\]
%(where the $\ell_1$ norm is componentwise, not induced.)

On the other hand the Laplacian expands as
\[
\mathcal{L} = \sum_{j} v_j \lambda_j  v_j^*,
\]
so we can introduce a convenient normalization factor $1/n$ and write 
\beq\label{eq:convcomb}
\< \frac{X}{n}, \mathcal{L} \> = \sum_{j} c_j \lambda_j, 
\eeq
with
\[
c_j = \< \frac{X}{n}, v_j v_j^* \> = \frac{ v_j^* X v_j}{n}.
\]
Notice that $c_j \geq 0$ since we require $X \succeq 0$. Their sum is
\[
\sum_j c_j = \< \frac{X}{n}, \sum_j v_j v_j^* \> = \< \frac{X}{n}, I \> = \frac{\tr(X)}{n} = 1.
\]
Hence (\ref{eq:convcomb}) is a convex combination of the eigenvalues of $\mathcal{L}$, bounded by $\| \tilde{\eps} \|_1 / n$. The smaller this bound, the more lopsided the convex combination toward $\lambda_1$, i.e., the larger $c_1$. The following lemma makes this observation precise.

\begin{lemma}\label{teo:one}
Let $\mu = \sum_j c_j \lambda_j$ with $c_j \geq 0$, $\sum_j c_j = 1$, and $\lambda_1 = 0$. If $\mu \leq \lambda_2$, then
\[
c_1 \geq 1 - \frac{\mu}{\lambda_2}.
\]
\end{lemma}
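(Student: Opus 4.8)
The plan is to exploit the ordering $\lambda_1 \leq \lambda_2 \leq \ldots \leq \lambda_n$ together with the vanishing of $\lambda_1$, so that the only machinery needed is monotonicity of the eigenvalues and the normalization $\sum_j c_j = 1$. No convexity beyond what is already encoded in the coefficients $c_j \geq 0$ is required; the statement is essentially a sharpened version of the trivial bound $\mu \geq \lambda_1 = 0$.

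First I would use $\lambda_1 = 0$ to discard the $j=1$ term, writing $\mu = \sum_{j \geq 2} c_j \lambda_j$. Next, since the eigenvalues are sorted in increasing order, every remaining index satisfies $\lambda_j \geq \lambda_2$, so I can bound each surviving term from below by $c_j \lambda_2$ and factor out $\lambda_2$:
\[
\mu = \sum_{j \geq 2} c_j \lambda_j \geq \lambda_2 \sum_{j \geq 2} c_j.
\]
Then I would rewrite the tail sum using the normalization, $\sum_{j \geq 2} c_j = 1 - c_1$, to obtain $\mu \geq \lambda_2 (1 - c_1)$. Dividing by $\lambda_2$ (which is legitimate and gives a nonnegative right-hand side precisely because the hypothesis $\mu \leq \lambda_2$ keeps $1 - c_1 \leq 1$) and rearranging yields $c_1 \geq 1 - \mu/\lambda_2$, as claimed.

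There is no genuine obstacle here: the argument is a single chain of inequalities, and the hypothesis $\mu \leq \lambda_2$ serves only to guarantee that the conclusion is informative (i.e.\ that the lower bound $1 - \mu/\lambda_2$ is nonnegative rather than vacuous). The one point worth stating carefully is that the bound $\lambda_j \geq \lambda_2$ is applied uniformly across all $j \geq 2$, so that a possible multiplicity of $\lambda_2$, or a large spread among the higher eigenvalues, never weakens the estimate. In the intended application this is exactly what forces the weight $c_1$ on the Perron direction $v_1 = x_0/\sqrt{n}$ to be close to $1$ once the misfit $\mu = \langle X/n, \mathcal{L} \rangle \leq \|\tilde\eps\|_1/n$ is small relative to the spectral gap.
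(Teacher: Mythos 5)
Your proof is correct and is essentially identical to the paper's: both discard the $j=1$ term using $\lambda_1 = 0$, bound $\lambda_j \geq \lambda_2$ for $j \geq 2$, use $\sum_{j\geq 2} c_j = 1 - c_1$, and rearrange. Your added remark that the hypothesis $\mu \leq \lambda_2$ only ensures the bound is nonvacuous is accurate but not needed for the argument.
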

\begin{proof}[Proof of lemma \ref{teo:one}.]
\[
\mu = \sum_{i \geq 2} c_j \lambda_j \geq \lambda_2 \sum_{j \geq 2} c_j = \lambda_2 (1-c_1),
\]
then isolate $c_1$.
\end{proof}

Assuming $\| \tilde{\eps} \|_1 \leq n \lambda_2$, we now have
\[
\< \frac{X}{n}, v_1 v_1^* \> \geq 1 - \frac{\| \tilde{\eps} \|_1}{n \lambda_2}.
\]
We can further bound
\begin{align*}
\| \frac{X}{n} - v_1 v_1^* \|_F^2 &= \tr \left[ \left( \frac{X}{n} - v_1 v_1^* \right)^2 \right] \\
% &= \tr ( v_1 v_1^*) + \frac{\tr(X)}{n^2} - 2 \; \tr \left( \frac{X}{n} v_1 v_1^* \right). 
&= \tr ( (v_1 v_1^*)^2) + \frac{\tr(X^2)}{n^2} - 2 \; \tr \left( \frac{X}{n} v_1 v_1^* \right). %VJ : corrected typos
\end{align*}
The first term is 1. The second term is less than 1, since $\tr(X^2)\leq\tr(X)^2$ % $\tr(X)^2 \leq \tr(X^2)$ VJ the inequality is the other way around 
for positive semidefinite matrices. Therefore,
\[
\| \frac{X}{n} - v_1 v_1^* \|_F^2 \leq 2 - 2 \; \tr \left( \frac{X}{n} v_1 v_1^* \right) \leq \frac{2 \| \tilde{\eps} \|_1}{n \lambda_2}.
\]

We can now control the departure of the top eigenvector of $X/n$ from $v_1$ by the following lemma. It is analogous to the sin theta theorem of Davis-Kahan, except for the choice of normalization of the vectors.  (It is also a generalization of a lemma used by one of us in \cite{DemanetHand} (section 4.2).) The proof is only given for completeness.

\begin{lemma}\label{teo:sintheta}
Consider any Hermitian $X \in \C^{n \times n}$, and any $v \in \C^n$, such that 
$\| X - v v^* \| < \frac{\| v \|^2}{2}$. Let $\eta_1$ be the leading eigenvalue of $X$, and $x_1$ the corresponding unit-norm eigenvector. Let $x$ be defined either as (a) $x_1 \| v \|$, or as (b) $x_1 \sqrt{\eta_1}$. Then
\[
\| \; x \| x \| - e^{i \alpha} v \| v \| \; \| \leq 2 \sqrt{2} \; \| X - v v^* \|,
\]
for some $\alpha \in [0,2 \pi)$.

\end{lemma}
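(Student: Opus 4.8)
The plan is to reduce the desired vector inequality to a statement about how close the two rank-one matrices $aa^*$ and $vv^*$ are in operator norm, where $a := x$ is whichever of the two normalizations we pick. Write $E = X - vv^*$ and $\delta = \|E\|$, so the hypothesis reads $\delta < \|v\|^2/2$. The argument then splits into two essentially independent pieces: (I) a spectral-perturbation step, carried out with Weyl's inequality, that controls $\|aa^* - vv^*\|$; and (II) an elementary inequality converting closeness of rank-one matrices into closeness of the underlying vectors up to a global phase. Throughout, $x_1$ is the top unit eigenvector of $X$ with eigenvalue $\eta_1$, and $q = v/\|v\|$ is the top unit eigenvector of $vv^*$.

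For ingredient (I), the eigenvalues of $vv^*$ are $\|v\|^2, 0, \dots, 0$, so Weyl gives $|\eta_1 - \|v\|^2| \le \delta$ and $|\eta_j| \le \delta$ for $j \ge 2$; in particular $\eta_1 \ge \|v\|^2 - \delta > \|v\|^2/2 > 0$, so $\sqrt{\eta_1}$ is real and the top eigenvector is spectrally separated. Since $X - \eta_1 x_1 x_1^*$ has the eigenvectors of $X$ with its top eigenvalue zeroed out, $\|X - \eta_1 x_1 x_1^*\| = \max_{j \ge 2}|\eta_j| \le \delta$, and the triangle inequality yields $\|\eta_1 x_1 x_1^* - vv^*\| \le 2\delta$. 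This settles normalization (b), where $a = \sqrt{\eta_1}\,x_1$ and $aa^* = \eta_1 x_1 x_1^*$. For normalization (a), $a = \|v\|\,x_1$ and $aa^* = \|v\|^2 x_1 x_1^*$, so I need $\|\,\|v\|^2 x_1 x_1^* - vv^*\| \le 2\delta$, equivalently $\|v\|^2 \sin\theta \le 2\delta$ where $\theta$ is the angle between $x_1$ and $v$. Here I would exploit that $x_1$ is an \emph{eigenvector}: projecting $X x_1 = \eta_1 x_1$ onto a unit vector $q_\perp \perp q$, the term $\langle q_\perp, vv^* x_1\rangle$ vanishes because $vv^* x_1$ is parallel to $v$, leaving $\eta_1 \sin\theta = \langle q_\perp, E x_1\rangle$, whence $\sin\theta \le \delta/\eta_1 < 2\delta/\|v\|^2$ and $\|v\|^2 \sin\theta < 2\delta$. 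So $\|aa^* - vv^*\| \le 2\delta$ in both cases.

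For ingredient (II), I would prove for arbitrary $a, b$ that after optimizing the phase,
\[
\min_\alpha \big\| a\|a\| - e^{i\alpha} b\|b\| \big\|^2 = \|a\|^4 + \|b\|^4 - 2\|a\|\,\|b\|\,|\langle a,b\rangle|.
\]
Setting $p = \|a\|^2$, $q = \|b\|^2$, and $s = |\langle a,b\rangle| \le \sqrt{pq}$ by Cauchy--Schwarz, the right-hand side is $p^2 + q^2 - 2\sqrt{pq}\,s \le p^2 + q^2 - 2s^2$. But $p^2 + q^2 - 2s^2 = \tr\big((aa^*-bb^*)^2\big) = \|aa^* - bb^*\|_F^2$, and a Hermitian matrix of rank at most two has Frobenius norm at most $\sqrt2$ times its operator norm. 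Hence $\min_\alpha \| a\|a\| - e^{i\alpha} b\|b\| \| \le \sqrt2\,\|aa^* - bb^*\|$. Taking $b = v$ and $a = x$ in either normalization and inserting $\|aa^* - vv^*\| \le 2\delta$ gives the claimed bound $2\sqrt2\,\|X - vv^*\|$.

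The main obstacle is the normalization-(a) case. Weyl's eigenvalue control alone does not pin down $\|\,\|v\|^2 x_1 x_1^* - vv^*\|$, and a crude triangle inequality through $\eta_1 x_1 x_1^*$ loses the constant (giving $3\delta$, hence $3\sqrt2\delta$). More dangerously, a naive Rayleigh-quotient estimate for the angle yields only $\sin\theta = O(\sqrt\delta/\|v\|)$, which would degrade the bound to $O(\sqrt\delta)$; the $O(\delta/\|v\|^2)$ rate is recovered only by using the eigenvector equation as above, which is exactly where the spectral-gap hypothesis $\delta < \|v\|^2/2$ enters (through $\eta_1 > \|v\|^2/2$). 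The remaining steps --- the phase optimization, Cauchy--Schwarz, and the rank-two comparison of Frobenius and operator norms --- are all one-line computations.
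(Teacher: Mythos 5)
Your proposal is correct, and its overall architecture coincides with the paper's: Weyl's inequalities to get $\|xx^* - vv^*\| \le 2\delta$, then the phase-optimization/Cauchy--Schwarz/rank-two Frobenius chain to pass from matrices back to vectors. Indeed, your ingredient (II) and your treatment of normalization (b) are line-for-line the paper's argument. The one place you genuinely diverge is normalization (a). The paper does not need your eigenvector-angle estimate: it observes that
\[
X - \|v\|^2 x_1 x_1^* \;=\; (\eta_1 - \|v\|^2)\, x_1 x_1^* + \sum_{j \ge 2} \eta_j \, x_j x_j^*
\]
is diagonal in the eigenbasis of $X$, with eigenvalues $\eta_1 - \|v\|^2$ and $\eta_j$, $j \ge 2$, each of modulus at most $\delta$ by Weyl; hence $\|X - xx^*\| \le \delta$ (a maximum, not a sum, of the two pieces), and the triangle inequality gives $\|vv^* - xx^*\| \le 2\delta$ directly. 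This is exactly the device that avoids the $3\delta$ loss you flagged, and it makes case (a) as short as case (b); in the paper the hypothesis $\delta < \|v\|^2/2$ is then needed only to guarantee $\eta_1 > 0$ and a simple top eigenvalue. Your alternative --- projecting $Xx_1 = \eta_1 x_1$ onto the orthogonal complement of $v$ to get $\sin\theta \le \delta/\eta_1$, then invoking $\eta_1 > \|v\|^2/2$ --- is a correct Davis--Kahan-style substitute that uses the hypothesis quantitatively rather than just for well-posedness; it costs a little more work (including the identity $\|x_1x_1^* - qq^*\| = \sin\theta$, which you should state as a fact or verify) but buys nothing extra here, since both routes land on the same constant $2\delta$ and hence the same final bound $2\sqrt{2}\,\|X - vv^*\|$.
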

\begin{proof}[Proof of Lemma \ref{teo:sintheta}]
Let $\delta = \| X - v v^* \|$. Notice that $\| vv^* \| = \| v \|^2$. Decompose $X = \sum_{j=1}^n x_j \eta_j x_j^*$ with eigenvalues $\eta_j$ sorted in decreasing order. By perturbation theory for symmetric matrices (Weyl's inequalities),
\begin{equation}\label{eq:pert_eig}
\max \{ | \| v \|^2 - \eta_1| , |\eta_2|, \ldots, |\eta_n| \} \leq \delta,
\end{equation}
so it is clear that $\eta_1 > 0$, and that the eigenspace of $\eta_1$ is one-dimensional, as soon as $\delta < \frac{\| v \|^2}{2}$. 

Let us deal first with the case (a) when $x = x_1 \| v \|$. Consider
\[
v v^* - x x^* = v v^* - X + Y,
\]
where
\[
Y = x_1 (\| v \|^2 - \eta_1) x_1^* + \sum_{j=2}^n x_j \eta_j x_j^*.
\]
From (\ref{eq:pert_eig}), it is clear that $\| Y \| \leq \delta$. Let $v_1 = v / \| v \|$. We get
\[
\| v v^* - x x^* \| \leq \| v v^* - X \| + \| Y \| \leq 2 \delta.
\]
Pick $\alpha$ so that $| v^* x | = e^{- i \alpha} v^* x$. Then
\begin{align*}
&\| \; v \| v \|  - e^{- i \alpha} x \| x \| \; \|^2 \\
&= \| v \|^4 + \| x \|^4 - 2 \, \| v \| \, \| x \| \, \Re \,  e^{- i \alpha}  v^* x \\
&= \| v \|^4 + \| x \|^4 - 2 \, \| v \| \, \| x \| \, | v^* x | \qquad \mbox{by def. of $\alpha$} \\
&\leq \| v \|^4 + \| x \|^4 - 2 \, | v^* x |^2 \qquad \mbox{by Cauchy-Schwarz} \\
&= \| vv^* - xx^* \|_F^2 \\
&\leq 2 \| vv^* - xx^* \|^2 \qquad \mbox{since $ vv^* - xx^*$ has rank 2} \\
&\leq 8 \delta^2.
\end{align*}

The case (b) when $x = x_1 \sqrt{\eta_1}$ is treated analogously. The only difference is now that
\[
Y = \sum_{j=2}^n x_j \eta_j x_j^*.
\]
A fortiori, $\| Y \| \leq \delta$ as well.

\end{proof}

Part $(a)$ of lemma \ref{teo:sintheta} is applied with $X/n$ in place of $X$, and $v_1$ in place of $v$. In that case, $\| v_1 \| = 1$. We conclude the proof by noticing that $v_1 = \frac{x_0}{\sqrt{n}}$, and that the output $x$ of the lifting method is normalized so that $x_1 = \frac{x}{\sqrt{n}}$.

\subsection{Proof of theorem \ref{teo:eigenvector}}

The proof is a simple argument of perturbation of eigenvectors. We either assume $\eps_{ij} = \conj{\eps_{ji}}$ or enforce it by symmetrizing the measurements. Define $\mathcal{L}$ as previously, and notice that $\| \mathcal{L} - \tilde{\mathcal{L}} \| \leq \| \eps \|$. Consider the eigen-decompositions
\[
\mathcal{L} v_j = \lambda_j v_j, \qquad \tilde{\mathcal{L}} \tilde{v}_j = \tilde{\lambda}_j \tilde{v}_j,
\]
with $\lambda_1 = 0$. Form
\[
\tilde{\mathcal{L}} v_j = \lambda_j v_j + r_j,
\]
with $\| r_j \| \leq \| \eps \|$. Take the dot product of the equation above with $\tilde{v}_k$ to obtain
\[
\< \tilde{v}_k , r_j \> = (\tilde{\lambda}_k - \lambda_j) \< \tilde{v}_k , v_j \>.
\]
Let $j = 1$, and use $\lambda_1 = 0$. We get
\[
\sum_{k \geq 2} | \< \tilde{v}_k, v_1 \> |^2 \leq \frac{\sum_{k \geq 2} | \< \tilde{v}_k, r_1 \> |^2} {\max_{k \geq 2} | \tilde{\lambda}_k |^2} \leq \frac{\| \eps \|^2}{\tilde{\lambda}^2_2}.
\]
%We write the denominator with $\tilde{\lambda}_2$ in place of $\lambda_2$, by noticing that 
%\[
%| \, \lambda_2 - \| \eps \| \, | = | \, \tilde{\lambda}_2 - (\lambda_2 - \tilde{\lambda_2}) - \| \eps \| \, | \geq | \, \tilde{\lambda}_2 - 2 \| \eps \| \, |
%\]
As a result,
\[
| \< \tilde{v}_1, v_1 \> |^2 \geq 1 - \frac{\| \eps \|^2}{\tilde{\lambda}^2_2}.
\]
Choose $\alpha$ so that $\< e^{i \alpha} \tilde{v}_1, v_1 \> = | \< \tilde{v}_1, v_1 \> |$. Then
\begin{align*}
\| v_1 - e^{i \alpha} \tilde{v}_1 \|^2 &= 2 - 2 \Re \< e^{i \alpha} \tilde{v}_1, v_1 \> \\
&=  2 - 2 | \< \tilde{v}_1, v_1 \> | \\
&\leq 2 - 2  | \< \tilde{v}_1, v_1 \> |^2 \\
&\leq 2 \frac{\| \eps \|^2}{\tilde{\lambda}_2^2}.
\end{align*}
Conclude by multiplying through by $n$ and taking a square root.

\subsection{Proof of theorem \ref{teo:main2}.}

The proof follows the argument in section \ref{sec:proof1}; we mostly highlight the modifications.

Let $b_i = |b_i| e^{i \phi_i}$. The Laplacian with phases is $\mathcal{L}_b = \Lambda_\phi L_{|b|} \Lambda^*_\phi$, with $\Lambda_\phi = \mbox{diag}(e^{i \phi_i})$. Explicitly,
\[ 
\left( \mathcal{L}_b \right)_{ij} = \left\{ \begin{array}{ll}
         \sum_{k: (i,k) \in E} |b_k|^2 & \mbox{if $i = j$};\\
        - b_i \conj{b_j} & \mbox{if $(i,j) \in E$};\\
        0 & \mbox{otherwise},\end{array} \right. 
\]
The matrix $Y = A X A^*$ is compared to the rank-1 spectral projectors of $\mathcal{L}_b$. We can write it as $Y  =  bb^* + \tilde{\eps}$ with $\| \tilde{\eps} \|_1 \leq \| \eps \|_1 + \sigma$. The computation of $\< bb^*, \mathcal{L}_b \>$ is now
\begin{align*}
\< bb^*, \mathcal{L}_b \> &= \sum_i b_i \conj{b_i} \conj{\mathcal{L}_{ii}} + \sum_{(i,j) \in E} b_i \conj{b_j} \,\conj{\mathcal{L}_{ij}} \\
&= - \sum_i |b_i|^2 \sum_{k:(i,k) \in E} |b_k|^2 + \sum_{(i,j) \in E} b_i \conj{b_j} \; \conj{b_i} b_j \\
&= \sum_i |b_i|^2 \left[ -  \sum_{j:(i,j) \in E} |b_j|^2 + \sum_{j:(i,j) \in E} |b_j|^2 \right] \\
&= 0.
\end{align*}
The error term is now bounded (in a rather crude fashion) as
\begin{align*}
| \< \tilde{\eps}, \mathcal{L}_b \> | &\leq \| \mathcal{L}_b \|_\infty \| \tilde{\eps} \|_1 \\ 
&\leq \left[ \max_i \sum_{j:(i,j) \in E} |b_j|^2 \right] \; \| \tilde{\eps} \|_1 \leq \| b \|^2 \| \tilde{\eps} \|_1.
\end{align*}
Upon normalizing $Y$ to unit trace, we get
\[
| \< \frac{Y}{\tr(Y)}, \mathcal{L}_b \> | \leq \frac{\| b \|^2 \| \tilde{\eps} \|_1}{\| b \|^2 + \tr(\tilde{\eps})} \leq 2 \| \tilde{\eps} \|_1,
\]
where the last inequality follows from
\begin{align*}
|\tr(\tilde{\eps})| &\leq \| \tilde{\eps} \|_1 \\
&\leq \| \eps \|_1 + \sigma \\
&\leq \lambda_2 / 2 \qquad \mbox{(assumption of the theorem)} \\
&\leq \| b \|^2/2 \qquad \mbox{(by Gershgorin)}.
\end{align*}

On the other hand, we expand 
\[
\< \frac{Y}{\tr(Y)}, \mathcal{L}_b \> = \sum_{j} c_j \lambda_j,
\]
and use $X \succeq 0 \Rightarrow Y \succeq 0$ to get $c_j \geq 0, \; \sum_j c_j = 1$. Since $2 \| \tilde{\eps} \|_1 \leq \lambda_2$, we conclude as in section \ref{sec:proof1} that 
\[
\< \frac{Y}{\tr(Y)}, v_1 v_1^* \> \geq 1- \frac{2 \| \tilde{\eps} \|_1}{\lambda_2},
\]
hence
\begin{equation}\label{eq:Ybound}
\| \frac{Y}{\tr(Y)} - v_1 v_1^* \|_F^2 \leq 4 \frac{\| \tilde{\eps} \|_1}{\lambda_2}.
\end{equation}
For $X = A^{+} Y (A^*)^+$, we get
\[
\| \frac{X}{\tr(Y)} - (A^{+} v_1) (A^{+} v_1)^* \|_F^2 \leq 4 \| A^{+} \|^4 \frac{\| \tilde{\eps} \|_1}{\lambda_2}.
\]
Call the right-hand side $\delta^2$. Recall that $v_1 = b / \|b\|$ hence $A^{+} v_1 = x_0 / \|b\|$. Using $\tr(Y) = \| b \|^2 + \tr (\tilde{\eps})$, we get
\begin{equation}\label{eq:Xbound}
\| X - x_0 x_0^* \| \leq \delta \, \tr (Y) + \frac{\| x_0 \|^2}{\| b \|^2} \, | \tr (\tilde{\eps}) |.
\end{equation}
Elementary calculations based on the bound $\| \tilde{\eps} \|_1 \leq \lambda_2/2 \leq \| b \|^2 / 2$ allow to further bound the above quantity by 
% $(2 + \sqrt{2}) \delta \| b \|^2$ VJ it seems you can get the better constant below
$\frac{(6 + \sqrt{2})}{4} \delta \| b \|^2$
. We can now call upon lemma \ref{teo:sintheta}, part (b), to obtain
\[
\| x \| x \| - e^{i \alpha} x_0 \| x_0 \| \| \leq 2 
%\sqrt{2} (2 + \sqrt{2}) \delta \| b \|^2, VJ tracking the change of constant
\sqrt{2} \frac{(6 + \sqrt{2})}{4} \delta \| b \|^2,
\]
where $x = x_1 \sqrt{\lambda_1(X)}$ is the leading eigenvector of $X$ normalized so that $\| x \|^2 = \lambda_1(X)$ is the leading eigenvalue of $X$. We use (\ref{eq:Xbound}) one more time to bound
\[
% | \lambda_1(X) - \| x_0 \|^2 | \leq (2 + \sqrt{2}) \delta \| b \|^2, VJ tracking the change of constant
| \lambda_1(X) - \| x_0 \|^2 | \leq \frac{(6 + \sqrt{2})}{4} \delta \| b \|^2,
\]
hence
\begin{align*}
&\| x_0 \| \, \| x - e^{i \alpha} x_0 \| \\
&\leq \| x \| x \| - e^{i \alpha} x_0 \| x_0 \| + \| x \| \, | \| x \| - \| x_0 \| | \\
% &\leq 2 \sqrt{2} (2 + \sqrt{2}) \delta \| b \|^2 + \frac{\| x \|}{\| x \| + \| x_0 \|} | \| x \|^2 - \| x_0 \|^2 | \\
% &\leq (2 \sqrt{2} + 1) (2 + \sqrt{2}) \delta \| b \|^2. VJ tracking the change of constant
&\leq 2 \sqrt{2} \frac{(6 + \sqrt{2})}{4} \delta \| b \|^2 + \frac{\| x \|}{\| x \| + \| x_0 \|} | \| x \|^2 - \| x_0 \|^2 | \\
&\leq (2 \sqrt{2} + 1) \frac{(6 + \sqrt{2})}{4} \delta \| b \|^2.
\end{align*}
Use $\| b \| \leq \| A \| \, \| x_0 \|$ and the formula for $\delta$ to conclude that
\[
\| x - e^{i \alpha} x_0 \| \leq C \, \| x_0 \| \, \kappa(A)^2 \, \sqrt{\frac{ \| \tilde{\eps} \|_1}{\lambda_2}},
\]
% with $C = 2 (2 \sqrt{2} + 1) (2 + \sqrt{2}) \leq 27$.
with $C = 2 (2 \sqrt{2} + 1) \frac{(6 + \sqrt{2})}{4} \leq 15$.

%We normalize the rank-1 term by observing that $v_1 = b / \|b\|$, $A^{-1} v_1 = x_0 / \|b\|$, hence
%\[
%(A^{-1} v_1) (A^{-1} v_1)^* = \frac{\| x_0 \|^2}{\| b \|^2} \; \frac{x_0 x_0^*}{\| x_0 \|^2}.
%\]
%It follows that, for some scalar $c$,
%\begin{align*}
%\| c X - \frac{x_0 x_0^*}{\| x_0 \|^2} \|_F^2 &\leq 4 \frac{\| b \|^4}{\| x_0 \|^4} \| A^{-1} \|^4 \frac{\| \eps \|_1}{\lambda_2} \\
%&\leq 4 \kappa(A)^4 \frac{\| \eps \|_1}{\lambda_2}.
%\end{align*}
%We conclude by applying lemma \ref{teo:sintheta}.

\subsection{Proof of theorem \ref{teo:main3}.}

The proof proceeds as in the previous section, up to equation (\ref{eq:Ybound}). The rest of the reasoning is a close mirror of the one in the previous section, with $Y$ in place of $X$, $y$ in place of $x$, $b$ in place of $x_0$, and $\delta$ re-set to $2 \sqrt{\| \tilde{\eps} \|_1 / \lambda_2}$. We obtain
\[
\| y - e^{i \alpha} b \| \leq 15 \, \| b \| \, \sqrt{\frac{ \| \tilde{\eps} \|_1}{\lambda_2}}.
\]
We conclude by letting $x  = A^+ y$, $x_0 = A^+ b$, and using $\| b \| \leq \| A \| \| x_0 \|$.

\end{document}